\newtheorem{theorem}{Theorem}[section]
\newtheorem{corollary}[theorem]{Corollary}
\newtheorem{lemma}[theorem]{Lemma}
\newtheorem{proposition}[theorem]{Proposition}
\newtheorem{remark}[theorem]{Remark}
\newtheorem{definition}[theorem]{Definition}
\def\aa#1{ \begin{align*} #1 \end{align*} }
\def\aaa#1{ \begin{align} #1 \end{align} }
\def\mm#1{ \begin{multline*} #1 \end{multline*} }
\def\mmm#1{ \begin{multline} #1 \end{multline} }
\def\begeq{\begin{equation} \begin{cases}} 
\def\endeq{ \end{cases} \end{equation}}
\def\eq#1{ \begeq #1 \endeq }
\def\bege{\begin{equation*} \begin{cases}} 
\def\ende{ \end{cases} \end{equation*}}
\def\eqn#1{ \begin{equation} \begin{aligned} #1 \end{aligned}\end{equation}}
\def\eqm#1{ \begin{equation*} \begin{aligned} #1 \end{aligned}\end{equation*}}
\newcommand{\eps}{\varepsilon}
\newcommand{\pl}{\partial}
\newcommand{\gt}{\geqslant}
\newcommand{\lt}{ \leqslant}
\newcommand{\te}{\theta}
\newcommand{\sub}{\subset}
\newcommand{\dl}{\updelta}
\newcommand{\al}{\alpha}
\newcommand{\gm}{\gamma}
 \newcommand{\Gm}{\Gamma}
 \newcommand{\la}{\lambda}
 \newcommand{\La}{\Lambda}
 \newcommand{\sg}{\sigma}
\newcommand{\mc}{\mathcal}
\newcommand{\mf}{\mathfrak}
\newcommand{\C}{{\rm C}}
\newcommand{\td}{\tilde}
\newcommand{\teta}{\upvartheta}
\newcommand{\f}{{\sss (\sfrac14)}}
\DeclareFontFamily{U}{solomos}{}
\DeclareFontShape{U}{solomos}{m}{n}{
  <-> s*[1.1]  gsolomos8r
}{}
\newcommand{\cp}{\text{\usefont{U}{solomos}{m}{n}\symbol{'153}}}
\newcommand{\X}{{\scriptstyle {\mathscr X}}}
\newcommand{\kp}{\kappa}
\newcommand{\T}{{\sss T}}
\newcommand{\x}{\times}
\newcommand{\mto}{\mapsto}
\newcommand{\rf}{\eqref}
\newcommand{\bi}{\begin{itemize}}
\newcommand{\ei}{\end{itemize}}
\newcommand{\be}{\begin{enumerate}}
\newcommand{\ee}{\end{enumerate}}
\DeclareMathOperator{\ind}{\mathbbm{1}}
\newcommand{\lb}{\label}
\newcommand{\fdot}{\,\cdot\,}
\newcommand{\sss}{\scriptscriptstyle}
\newcommand{\sig}{\varsigma}
\newcommand{\ro}{\varrho}
\def\Rnu{{\mathbb R}}
\def\Nnu{{\mathbb N}}
\def\ffi{\varphi}
\renewcommand{\l}{\left}
\renewcommand{\r}{\right}
\newcommand{\ou}{\bar{u}}
\newcommand{\uu}{\underline{u}}
\long\def\symbolfootnote[#1]#2{\begingroup%
\def\thefootnote{\fnsymbol{footnote}}\footnote[#1]{#2}\endgroup}
\titleformat{\section}[hang]{\large\bfseries}{\thesection.}{1ex}{}{}
\titleformat{\subsection}[hang]{\normalsize\bfseries}{\thesubsection}{2ex}{}{}
\titleformat{\subsubsection}[hang]{\small\bfseries}{\thesubsubsection}{2ex}{}{}
\title[Touchdown solutions in general MEMS models]
{Touchdown solutions in general MEMS models}
\author[R. Clemente]{Rodrigo Clemente}
\author[J.M. do \'O]{Jo\~ao Marcos do \'O$^*$}
\author[E.\ da Silva]{Esteban da Silva}
\author[E.\ Shamarova]{Evelina Shamarova}
\address[R. Clemente]{Departamento de Matem\'atica, 
Universidade Federal Rural de Pernambuco,
52171-900, Recife, Pernambuco, Brazil}
\email{\href{mailto:rodrigo.clemente@ufrpe.br}{rodrigo.clemente@ufrpe.br}}
\address[$^*$Corresponding author: J.M. do \'O ]{Departamento de Matem\'atica, 
Universidade Federal da Para\'iba, 
58051-900, Jo\~ao Pessoa, Brazil}
\email{\href{mailto:jmbo@pq.cnpq.br}{jmbo@pq.cnpq.br}}
\address[E.\ da Silva]{Departamento de Matem\'atica, 
	Universidade Federal do Rio Grande do Norte, 
	59078-970, Natal, Brazil}
\email{\href{mailto:esteban.silva@ufrn.br}{esteban.silva@ufrn.br}}
\address[E.\ Shamarova]{Departamento de Matem\'atica, 
	Universidade Federal da Para\'iba, 
	\newline\indent
	58051-900, Jo\~ao Pessoa, Brazil}
\email{\href{mailto:evelina.shamarova@academico.ufpb.br}{evelina.shamarova@academico.ufpb.br}}
\begin{document}

\begin{abstract}
We study general equations modeling electrostatic MEMS devices
\begin{equation} \begin{cases}
 \label{P}
\varphi\big(r,- u'(r)\big)=\lambda\int_0^r\frac{f(s)}{g(u(s))}\,\mathrm{d}s, 
& r\in(0,1), \\
0  < u(r) < 1,  & r\in(0,1), \\
u(1) = 0,
\tag{$P_\la$}
 \end{cases} \end{equation}
where $\varphi$, $g$, $f$ are some functions on $[0,1]$ and $\lambda>0$ is a parameter. We obtain results on the existence and regularity of a touchdown solution to \eqref{P} and
find upper and lower bounds on the respective pull-in voltage.
In the particular case, when $\varphi(r,v) = r^\alpha |v|^\beta v$, i.e., when the associated
differential equation involves the operator $r^{-\gamma}(r^\alpha |u'|^\beta u')'$,
we obtain an exact asymptotic behavior of the touchdown solution in a neighborhood
of the origin.
\end{abstract}

\vspace{4mm}

\maketitle

{\footnotesize
{\noindent \bf Keywords:} 
Touchdown solution, singular solution, pull-in voltage, sub- and supersolution,
asymptotic behavior
\vspace{1mm}

{\noindent\bf 2020 MSC:}  34B15, 35A24, 35B09, 35B40, 35J75}



\section{Introduction}
Microelectromechanical systems (MEMS) 
 are microscopic devices consisting of electrical and mechanical components 
 built together on a chip. This system  is usually of a tiny dimension, between 1 and 100 micrometers. MEMS devices are indispensable  in the modern technology 
such as telecommunications,  biomedical engineering, exporing the space, etc. For more information on the applications of MEMS, we refer the reader to  P.~Esposito {et al.} \!\cite{MR2604963} and to D. Bernstein, J. Pelesko \cite{MR1955412}.

Equations describing MEMS are written with respect to the displacement of 
a thin and deformable microplate (membrane) 
which is fixed along the boundary of a bounded domain
(usually a unit ball)  below a fixed rigid plate.  
 The membrane deflects towards the rigid plate when one applies a voltage, 
represented here by $\la$.
 It may happen that the membrane touches the rigid plate when the voltage reaches
 a certain  critical value $\la^*$\!, called the pull-in voltage. In the latter case,
 an instability is created within the system which may greatly affect the MEMS device.
 The corresponding solution for the model is then called
a touchdown solution. Since one aims to achieve a better design for a MEMS device,
studying touchdown solutions in different MEMS models is therefore of great
interest and importance.


\subsection{Motivation and related results}
 Recently, many authors proposed MEMS models generalizing the classical one,
 i.e., whose main equation is  $-\Delta u = \frac{\lambda f(x)}{(1-u)^2}$.
 We refer the reader to \cite{MR2604963,MR1955412} for a survey of results
 on the classical MEMS model.
 Several authors, e.g., \cite{MR2500847,MR2471138, KMS, Korman},  studied 
 so-called $p$-MEMS models,
 that is,  when the main MEMS equation contains a $p$-Laplacian
 instead of the Laplacian. 
Subsequently, MEMS problems on a ball involving a more general radial nonlinear 
 operator $L(\al,\beta,\gm)[u] = r^{-\gm} (r^\al |u'|^\beta u')'$
 received some  attention in the literature \cite{MR3759060,MR3491876,MR3571913}.
 Problems containing the operator $L(\al,\beta,\gm)$ have been also considered 
 in many other contexts \cite{MR1422009, MR1929156, jmss}. As such, in \cite{MR1422009}, Cl\'ement {et al.} studied a Brezis-Nirenberg-type problem, in \cite{MR1929156}, J.~Jacobsen and K.~Schmitt studied a Liouville-Bratu-Gelfand problem. We refer the reader to \cite{MR2569324,MR2166492} 
 for a survey of models containing the operator $L(\al,\beta,\gm)$.
 On the other hand, touchdown issues in MEMS models received a lot of attention in the literature; see, e.g., \cite{Guo, Guo1, Guo2, GM, KMS}.

Motivated by the aforementioned works and in the spirit of the pioneering work \cite{MR0382848}, 
we study here  the very general class of MEMS problems 
described by its unique formulation \rf{P}. 
Our main  motivation for the 
choice of the model \rf{P} is to encompass the following below 
important examles, where the function $h(r)$ stands for the 
spatially varying dielectric permittivity, which is assumed positive
and integrable on $[0,1]$.

\begin{enumerate}[label=($E_\arabic*$)]
	
\item \lb{E1} $L(\al,\beta,\gm) = r^{-\gm}(r^{\alpha}|u'|^{\beta}u')'$
with $\al\lt \gm$ and $\beta>-1$; 
the associated equation in \rf{P} takes the form
\aaa{
\lb{G}
-r^\alpha|u'|^\beta u'= \la \int_0^r\frac{s^\gm h(s)}{g(u)}\, ds;
\qquad \ffi(r,v)=r^\alpha|v|^\beta v.
\tag{$\hat P_\la$}
}
\item \lb{E2} $\sum_{i=1}^n L(\al_i,\beta_i,\gm)$ with 
$\al_i\lt \gm$ and $\beta_i>-1$. In this case, the 
associated equation in  \eqref{P} becomes
\aa{
-\sum_{i=1}^n r^{\alpha_i}|u'|^{\beta_i} u'
=\la \int_0^r\frac{s^\gm h(s)}{g(u)}\, ds; \qquad
\ffi(r,v)=\sum_{i=1}^n r^{\alpha_i}|v|^{\beta_i} v.
}
\item \lb{E3} {\it Radial form of the $p(x)$-Laplacian.} For this operator, the associated 
equation in \eqref{P} is
\aa{
-r^{N-1}|u'|^{p(r)-2} u'=\la \int_0^r \frac{s^{N-1}h(s)}{g(u)}\, ds; 
\qquad  \ffi(r,v)= r^{N-1}|v|^{p(r)-2} v,
}
where $p(r)$ is a continuous function with values in $[1+\epsilon,N)$,
$\epsilon \in (0,1)$, $N>1$.
\item \lb{E4}
{\it Laplace-Beltrami operator on an $N$-dimensional sphere
of radius $\rho\gt 1$.} Here, the equation in \rf{P} is
\aa{
-\rho\l(\sin\l(\frac{r}{\rho}\r)\r)^{\!N-1}\!\! u'=\la \int_0^r  \frac{(\sin(\frac{s}{\rho}))^{N-1}h(s)}{g(u)}\, ds; 
\qquad \ffi(r,v)= \rho \sin\l(\frac{r}{\rho}\r)^{N-1}\!\! v.
}

\end{enumerate}

We remark that apart from examples \ref{E1}--\ref{E4}, our general formulation  \rf{P} can possibly be applied to other MEMS models.

Note that the operator $L(\al,\beta, \gm)$ includes a $k$-Hessian and 
a $p$-Laplacian. Namely, the two latter cases are associated with the 
following values of $\al$, $\beta$, $\gm$:
\begin{table}[htbp]
\centering
\begin{tabularx}{0.55\textwidth}{X  X  X  X  X X} 
\hline
 & & $\al$ & $\beta$ & $\gm$ & $\te$ \\
\hline
\text{$k$-Hessian} & & $N-k$ & $k-1$ & $N-1$& $2k$ \\
\hline
\text{$p$-Laplacian} &  & $N-1$ & $p-2$ & $N-1$ & $p$ \\
\hline
\end{tabularx}
\label{table:tab1} 
\end{table}

The number $\te = \gm +2+ \beta - \al$ will be used in Theorem \ref{ss2222}, 
and $N$ stands for the spatial dimension of the associated problem.

Model \ref{E1}, in application to MEMS, 
has been previously studied in the literature by some authors
\cite{GM,MR3491876, MR3571913}.
The role of the parameters $\al$, $\beta$, $\gm$ is to describe  
properties of the substance between the plates of the MEMS device.
If this substance is inhomogeneous, i.e., its properties are changing
from layer to layer, a better model is \ref{E2}. To describe properties
of a substance changing along the radius of the ball, one uses 
model \ref{E3}. Finally, model \ref{E4} was proposed in 
\cite{MR3759060} to account for the specific profile
of the deflected membrane.

\subsection{Hypotheses}\label{hypo}
Throughout Sections \ref{s2}--\ref{s4}, we assume  

\be [label=($H_\arabic*$)]
\item  $g\in  \C\big((-\infty,1]\to [0,\infty)\big)$ is strictly decreasing 
and $g(1)=0$. \label{a1111} \lb{H1}
\item \lb{H4} $f: [0,1]\to [0,+\infty)$ is a measurable function such that for
its primitive function 
\aa{
F(r) = \int_0^r f(s)\, ds,
}
it holds that $F(1)<+\infty$ and $F(r) > 0$ for all $r\in (0,1]$.
\item \lb{H2} $\ffi:[0,1]\x \Rnu \to \Rnu$  is a measurable function with the following properties.
For each $r\in (0,1]$, the function $v\mto \ffi(r,v)$ is strictly increasing,
continuous, and such that $\ffi(r,0) = 0$, $\lim_{v\to+\infty} \ffi(r,v) = +\infty$,
and $\int_0^1 \ffi^{-1}(s, F(s))\, ds < +\infty$, 
where for each fixed $r\in [0,1]$, $\ffi^{-1}(r,\fdot)$ denotes the inverse to 
$\ffi(r,\fdot)$.
\item \lb{H3} There exist  a polynomial-like function $\mc P(v)=\sum_{i=1}^m c_i v^{d_i}$, where 
$c_i,d_i\in (0,+\infty)$, and 
a measurable function $a: [0,1]\to (0,+\infty)$ 
with the property $\sup_{r\in [0,1]} a(r) <+\infty$, 
such that for all $(r,v)\in [0,1]\x [0,+\infty)$,
\aa{
\ffi(r,v) \lt a(r) \mc P(v).
}
\vspace{-7mm}
\ee
For the problem \rf{G}, our goal is to construct a touchdown solution and 
obtain its exact asymptotic behavior in a neighborhood of the origin.
For this task, we assume
\be[label=($A_\arabic*$)]
\item \ref{a1111} holds; \label{A1}
\item  \label{A2} $g\in \C^2(0,1)$ and there exist constants $q\in (0,1)$ 
and $A>0$ such that 
\aa{
g'(u) = - A\, g(u)^q (1+o(1)) \qquad (u\to 1_-).
}
\vspace{-6mm}
\item $\beta> -1$, $\al>\beta+1$, $\gm\gt \al$.
\item $h: \Rnu_+\to [c,b]\sub (0,+\infty)$ is a measurable function such that 
$h(r) = C(1+o(1))$ ($r\to 0_+$), 
where $C>0$ is a constant.
\label{A4} 

\ee
\begin{remark}
\rm
Note that \ref{A1} and \ref{A2} hold for the MEMS nonlinearity
$g(u) = (1-u)^p$, $p>1$. In this case, $A=p$ and $q=\frac{p-1}{p}$.
Other examples may include the nonlinearity of the form
$g(u) = \zeta(u)^p$, $p>1$, where $\zeta\in \C^2(0,1)$ 
satisfies \ref{A1}
and in a neighborhood of $u=1$ solves the equation 
$c_1 \zeta(u)^{\nu +1} - \zeta(u) = c_2(u-1)$ with $c_1,c_2,\nu>0$.
\end{remark}
\begin{remark}
\lb{a2222}
\rm
Note that Assumption \ref{A2} is equivalent to the following
\aa{
g'(g^{-1}(w)) = - A \, w^q (1+o(1)) \qquad (w\to 0_+).
}
\end{remark}

\subsection{Contributions of this work}
In this paper, we address the question of existence of a pull-in voltage and a touchdown
solution for the MEMS model \rf{P} encompassing \ref{E1}-\ref{E4}.
 Furthermore, we address obtaining an exact
asymptotic behavior, in a neighborhood of the origin,  of the touchdown solution for model \ref{E1}.

Most of our results are obtained under hypotheses \ref{H1}--\ref{H3}.
More specifically, we start by showing the existence of a finite pull-in voltage 
$\la^*$ employing the sub- and supersolution method. 
We then explicitely compute upper and lower bounds on $\la^*$. 
Since most of standard methods cannot be applied to our very general framework,
we use a different approach, relying on the Zorn lemma, to show
the existence and monotonicity (in $\la$) of the maximal and minimal branches 
of solutions to problem \rf{P}. Finally, assuming the existence of the limit
$\lim_{\la\to\la^*} u_\la(0) = 1$ for the maximal or minimal branches,
we obtain the existence and uniqueness of a touchdown solution
to \rf{P}. In addition, we obtain the $\C^2$-regularity
of the touchdown solution under the assumption of the $\C^1$-regularity of
the function $\ffi$.

Our most challenging result is obtaining a sharp asymptotic representation
(in a neighborhood of the origin)
of the touchdown solution for the model \ref{E1}.
A related result was obtained in \cite{GM}; however, only in the form $u^*(r) = G^{-1}\big[\frac{r^\teta}{\mc A}(1+o(1))\big]$
with some positive constants $\teta$ and $\mc A$, and the function
$G(u) = \int_0^u g(s)^{\frac1{\beta+1}} ds$. An exact asymptotic behavior,
 similar to as it is announced  in Theorem \ref{ss2222}
of the present work,
was only obtained in \cite{GM} for a few explicitely given functions $g$. 
In contrast, the result of our Theorem \ref{ss2222} does not rely on any  
specification of the function $g$; it is obtained for an arbitrary function
$g$ satisfying \ref{A1} and \ref{A2}.

We believe that some of our results, such as upper and lower bounds on the pull-in voltage
 and the sharp asymptotic behavior of the touchdown solution, can be used
 for studying MEMS by physicists, engineers, and   numerical mathematicians.
 In addition, we present new methods, e.g.,  the use of the Zorn lemma to 
 show the existence of the maximal
 and minimal branches (Theorem \ref{prop1111})
  or fitting unknown parameters in the representation of a singular solution
   to make a certain map have a fixed point 
  (Theorem \ref{ss2222}).
Below, we announce the main results of this work.

\begin{theorem}[Finite pull-in voltage and its bounds]
\lb{t-pulin}
Assume \ref{H1}--\ref{H3}. Then, the pull-in voltage $\la^*$ for problem \rf{P} exists;
furthermore, it is positive and finite. Moreover, we have the upper bound
 \aaa{
 \lb{pulin}
 \la^* 
 \lt p_{d,e}^{-1}\Big( 
 2 P_{d,e}\big(\mc P(1)\sup_{r\in [0,1]}a(r)\big)
 P_{d,e}(g(0)) \big[p_{d,e} \l(F(\sfrac12)\r) \big]^{-1} \Big),
 }
 where $F(\fdot)$ is defined in \ref{H4}, 
 $P_{d,e}(r) = \max\{r^\frac1d, r^\frac1e\}$,
$p_{d,e} (r) = \min\{r^{\frac1d}, r^{\frac1e}\}$
with $d = \max_i d_i$ and $e=\min_i d_i$.
Furthermore, defining $\Phi(\la) = \int_0^1 \ffi^{-1}(s,\la \,F(s)) ds$,
we have the lower bound
\aaa{
\lb{l-pulin}
\la^*\gt 
\begin{cases}
g(\Phi(1)) \;\; \text{if} \;\; \Phi(1)<1,\\
\sup_{\dl\in [0,1]}\Phi^{-1}(\dl)g(\dl)
\;\; \text{if} \;\; \Phi(1)\gt 1.
\end{cases}
}

\begin{theorem}
\lb{prop1111}
Assume \ref{H1}--\ref{H3}. Then, 
there exist a unique maximal and a unique minimal branches of solutions 
for problem \rf{P}, where $\la$ varies over $(0,\la^*)$.
\end{theorem}
 \end{theorem}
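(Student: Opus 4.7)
The plan is to work parameter by parameter: for each $\la\in(0,\la^*)$ I would construct the pointwise largest (resp.\ smallest) solution of $(P_\la)$, and then verify that these extremal solutions assemble into monotone families in $\la$. The first step is to recast $(P_\la)$ as the fixed-point equation $u=T_\la[u]$, where
\[
T_\la[u](r):=\int_r^{1}\ffi^{-1}\!\left(t,\,\la\int_0^{t}\frac{f(s)}{g(u(s))}\,ds\right)dt.
\]
By \ref{H1} the map $u\mapsto 1/g(u)$ is increasing on $(-\infty,1)$, and by \ref{H2} each section $\ffi^{-1}(t,\cdot)$ is increasing; hence $T_\la$ is monotone nondecreasing in $u$ and $\la\mapsto T_\la[u]$ is monotone nondecreasing, and the solutions of $(P_\la)$ are precisely the fixed points of $T_\la$.

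Fix $\la\in(0,\la^*)$ and pick $\mu\in(\la,\la^*)$. Let $U$ be a solution of $(P_\mu)$ produced by the sub/supersolution proof of Theorem~\ref{t-pulin}; then $U<1$ strictly on $[0,1]$ and $U$ is a supersolution of $(P_\la)$, i.e.\ $T_\la[U]\lt U$. A comparison argument through $T_\la$ shows that every $u\in\mc S_\la$ (the non-empty set of solutions of $(P_\la)$) satisfies $u\lt U$. I order $\mc S_\la$ by the pointwise relation $\lt$. For any chain $\mc C\sub\mc S_\la$, the pointwise supremum $v=\sup_{u\in\mc C}u$ satisfies $v\lt U$ and, by monotonicity of $T_\la$, $u=T_\la[u]\lt T_\la[v]$ for every $u\in\mc C$, whence $v\lt T_\la[v]\lt U$. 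The iterates $v_n=T_\la^n[v]$ form a monotone increasing sequence dominated by $U$, and their pointwise limit $\td v$ is a fixed point of $T_\la$; hence $\td v\in\mc S_\la$ and bounds $\mc C$ above. Zorn's lemma then produces a maximal element $\ou_\la\in\mc S_\la$.

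For uniqueness, if $\ou$ and $\ou'$ were two maximal elements of $\mc S_\la$, then $w=\max(\ou,\ou')$ would satisfy $T_\la[w]\gt w$ by monotonicity, and iterating $T_\la$ from $w$ would converge to a fixed point $w^\star\gt\max(\ou,\ou')$ in $\mc S_\la$; maximality of $\ou$ and $\ou'$ then forces $\ou=\ou'=w^\star$. A symmetric argument starting from infima of chains and iteration from $\min(\uu,\uu')$ yields a unique minimal element $\uu_\la\in\mc S_\la$. Monotonicity of $\la\mapsto\ou_\la$ and $\la\mapsto\uu_\la$ follows from $T_{\la_1}[u]\lt T_{\la_2}[u]$ for $\la_1<\la_2$: iterating $T_{\la_2}$ from $\ou_{\la_1}$ yields a solution of $(P_{\la_2})$ dominating $\ou_{\la_1}$, so $\ou_{\la_1}\lt\ou_{\la_2}$, and analogously $\uu_{\la_1}\lt\uu_{\la_2}$.

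The main obstacle is justifying that the monotone iteration $T_\la^n[v]\nearrow\td v$ actually produces a solution, i.e.\ that one may pass to the limit through the nested integrals defining $T_\la$; a closely related difficulty is the comparison inequality $u\lt U$ used in Step~2. Both rest on the fact that all iterates (and all elements of $\mc S_\la$) are dominated by the strict supersolution $U<1$, so that $1/g(v_n)\lt 1/g(U)\in L^\infty(0,1)$, and since $f\in L^1(0,1)$ by \ref{H4}, the inner integral converges by dominated convergence. The outer integral is then handled via the integrability $\int_0^1\ffi^{-1}(s,\la F(s))\,ds<\infty$ from \ref{H2} together with the continuity of $\ffi^{-1}(t,\cdot)$, yielding $\td v=T_\la[\td v]$.
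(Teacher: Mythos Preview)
Your overall architecture---Zorn's lemma on the solution set, with monotone iteration to manufacture upper bounds for chains---matches the paper's, and your uniqueness step via $w=\max(\ou,\ou')$ is a clean way to upgrade a Zorn-maximal element to a genuine maximum. However, the chain step contains a real gap.

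The assertion that ``a comparison argument through $T_\la$ shows that every $u\in\mc S_\la$ satisfies $u\lt U$'' is neither proved nor, in general, true. Monotonicity of $T_\la$ guarantees only that the \emph{minimal} solution (the increasing limit of $T_\la^n[0]$) lies below any given supersolution; it says nothing about other fixed points of $T_\la$. In concrete MEMS models the bifurcation diagram has turning points, and a large solution at parameter $\la$ may well exceed the minimal (or any particular) solution at a larger parameter $\mu$; so there is no reason the $U$ you pick dominates all of $\mc S_\la$. Without this uniform bound, the pointwise supremum $v$ of a chain can reach $1$ at the origin, so that $1/g(v)\notin L^1$, $T_\la[v]$ is undefined, and the iterates $T_\la^n[v]$ need not remain in the admissible range. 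The same obstruction undermines the iteration in your uniqueness step: nothing you have written prevents $T_\la^n[\max(\ou,\ou')]$ from leaving $\{u<1\}$.

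The paper sidesteps this entirely. Instead of bounding the chain by an external supersolution, it uses the integral representation to prove that any two comparable solutions $u\lt v$ of $(P_\la)$ satisfy
\[
0\lt v(r)-u(r)\lt v(0)-u(0)\qquad\text{for all }r\in[0,1].
\]
Setting $a_{\max}=\sup_{u\in C_\la}u(0)$ and choosing an increasing sequence $u_n\in C_\la$ with $u_n(0)\to a_{\max}$, this estimate makes $\{u_n\}$ Cauchy in $\C[0,1]$; the uniform limit $v$ then solves the equation by dominated convergence, and a short trichotomy on $v(0)$ versus $u(0)$ shows that $v$ is comparable with every $u\in C_\la$ and is the maximal element of the chain. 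No external supersolution bound is invoked.
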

 \begin{theorem}[Existence of a touchdown solution]
\lb{th2222}
Assume \ref{H1}--\ref{H3}.
Let $\{u_\la\}_{\la\in (0,\la^*)}$ be the maximal or minimal branch of solutions to \rf{P} such that
$\lim_{\la\to\la^*} u_\la(0) = 1$. Then, the pointwise
 limit $u_{\la^*}(r) = \lim_{\la\to\la^*}u_\la(r)$ exists and it is an integral touchdown solution
 to $(\rm P_{\!\la^*})$.
\end{theorem}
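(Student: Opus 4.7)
The plan is to combine the monotonicity in $\la$ of the branch (Theorem \ref{prop1111}) with the uniform bound $0\lt u_\la(r)\lt 1$ to extract a pointwise limit $u^*$, and then pass to the limit in the integrated form of \rf{P} via two applications of the monotone convergence theorem.

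First, I would invert $\ffi(r,\fdot)$ in \rf{P} (a strictly increasing bijection on $[0,+\infty)$ by \ref{H2}) and integrate from $r$ to $1$ using $u_\la(1)=0$, obtaining
\aa{
u_\la(r) = \int_r^1 \ffi^{-1}\!\Big(s,\,\la \int_0^s \tfrac{f(t)}{g(u_\la(t))}\,dt\Big)\,ds.
}
Since $\la\mto u_\la(r)$ is monotone along the branch, so is $\la\mto 1/g(u_\la(s))$ by \ref{H1}. Setting $u^*(r):=\lim_{\la\to\la^*}u_\la(r)$ (which exists as a monotone bounded limit), monotone convergence applied to the inner integral yields
\aa{
\int_0^s \tfrac{f(t)}{g(u_\la(t))}\,dt \longrightarrow \int_0^s \tfrac{f(t)}{g(u^*(t))}\,dt \in [0,+\infty],
}
and the continuity and monotonicity of $\ffi^{-1}(s,\fdot)$ (extended by $\ffi^{-1}(s,+\infty):=+\infty$) transfer this to monotone convergence of the outer integrand. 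A second monotone convergence step then gives
\aa{
u^*(r) = \int_r^1 \ffi^{-1}\!\Big(s,\,\la^* \int_0^s \tfrac{f(t)}{g(u^*(t))}\,dt\Big)\,ds, \qquad r\in[0,1].
}

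To recognize $u^*$ as an integral touchdown solution, I would verify the remaining structural properties: $u^*(1)=0$ and $u^*(0)=1$ hold by construction and hypothesis; each $u_\la$, and hence $u^*$, is nonincreasing on $[0,1]$ because the right-hand side of \rf{P} is nonnegative and $\ffi(r,\fdot)$ vanishes only at $0$. If $u^*(r_0)=1$ held for some $r_0\in(0,1)$, monotonicity would force $u^*\equiv 1$ on $[0,r_0]$; by \ref{H4} the inner integral would then equal $+\infty$ for every $s>r_0$, giving $u^*(r_0)=+\infty$ and contradicting $u^*(r_0)=1$. Therefore $u^*<1$ on $(0,1]$, the inner integral is finite there, and the displayed identity is exactly the integral form of $(\rm P_{\!\la^*})$.

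The main obstacle I expect is controlling the singularity $g(u^*)\to 0$ near the origin during the limit passage, since if $u^*$ stuck to $1$ on an initial interval both iterated integrals would diverge. The resolution is not a direct integrability estimate but the a posteriori observation that the outer integral is majorized by $u^*(r)\lt 1$, which together with the monotonicity of $u^*$ rules out the pathology automatically.
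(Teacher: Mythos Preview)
Your proposal is correct and, in fact, takes a somewhat cleaner route than the paper's own proof. Both arguments start from the integrated form
\[
u_\la(r)=\int_r^1 \ffi^{-1}\!\Big(t,\la\!\int_0^t\frac{f(s)}{g(u_\la(s))}\,ds\Big)\,dt,
\]
but diverge in how they pass to the limit. The paper first invokes Lemma~\ref{lem1111}, which gives a uniform bound $u_\la\lt 1-\dl$ on each compact subinterval $[\sg,1]\sub(0,1]$; this lets one apply the \emph{bounded} convergence theorem to the truncated inner integral $\int_{1/N}^t$, then monotone convergence in $N$, and finally dominated convergence for the outer integral. You instead exploit directly the monotonicity of $\la\mapsto u_\la(r)$ (the Proposition and Corollary~\ref{cor4444} following Theorem~\ref{prop1111}) to apply the monotone convergence theorem twice, without any truncation and without Lemma~\ref{lem1111}.

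What each approach buys: your argument is shorter and more elementary, and the a~posteriori step (if $u^*(r_0)=1$ for some $r_0>0$, the inner integral blows up and forces $u^*(r_0)=+\infty$) neatly replaces the quantitative input of Lemma~\ref{lem1111}. The paper's route, on the other hand, yields the stronger conclusion $u_{\la^*}\lt 1-\dl$ on every $[\sg,1]$ uniformly in $\la$, which is of independent interest. One minor point: the monotonicity in $\la$ is not stated in Theorem~\ref{prop1111} itself but in the subsequent Proposition and Corollary~\ref{cor4444}; and to fully match Definition~\ref{int-sol1111} you should note that the finiteness of the outer integral forces $\int_0^s f/g(u^*)<+\infty$ for all $s\in(0,1)$, whence $u^*\in\C^1(0,1)$ by differentiating the integral identity.
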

\begin{theorem}[Uniqueness of a touchdown solution]
\lb{unique2222}
Assume \ref{H1}--\ref{H3}. Then, an integral touchdown solution, whenever it exists, is unique
and coincides with $u_{\la^*}$, the pointwise limit of the maximal branch
as $\la\to\la^*$.
\end{theorem}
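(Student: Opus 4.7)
The plan is to prove $v=u_{\la^*}$ by first establishing the pointwise comparison $u_{\la^*}\lt v$ on $[0,1]$ and then exploiting the common touchdown value at the origin, together with a direct ODE comparison, to force the reverse inequality. The key insight is that once $u_{\la^*}\lt v$ pointwise, the strict monotonicity of $g$ (decreasing) and of $\ffi(r,\cdot)$ (increasing), combined with both functions satisfying $(\mathrm{P}_{\!\la^*})$, squeezes $v$ back up to $u_{\la^*}$.

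For the forward comparison, I would use that $v$ solves $(\mathrm{P}_{\!\la^*})$ to deduce that $v$ is a strict supersolution for $(\mathrm{P}_\la)$ at every $\la\in(0,\la^*)$:
\[
\ffi(r,-v'(r))=\la^*\int_0^r\frac{f(s)}{g(v(s))}\,ds>\la\int_0^r\frac{f(s)}{g(v(s))}\,ds.
\]
Combining this supersolution property with the sub/supersolution framework underlying the construction of the maximal branch in Theorem~\ref{prop1111} would yield $\bar u_\la\lt v$ on $[0,1]$ for each $\la\in(0,\la^*)$. The technical tool here is a maximum-principle argument on the auxiliary function $V(r)=\ffi(r,-\bar u_\la'(r))-\ffi(r,-v'(r))=\la G_{\bar u_\la}(r)-\la^* G_v(r)$, where $G_u(r)=\int_0^r f(s)/g(u(s))\,ds$: assuming for contradiction that $\bar u_\la>v$ on some maximal connected subinterval $(a,b)\subset(0,1]$, the touching conditions $\bar u_\la(a)=v(a)$, $\bar u_\la(b)=v(b)$, together with the strict monotonicity of $\ffi(r,\cdot)$, force $V(a)\lt 0\lt V(b)$, and a careful analysis of $V(b)-V(a)$ exploiting the strict inequality $\la<\la^*$ delivers the contradiction. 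Passing to the limit via the monotonicity of the maximal branch then yields $u_{\la^*}(r)=\lim_{\la\to\la^*}\bar u_\la(r)\lt v(r)$.

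For the reverse inequality, from $u_{\la^*}\lt v$ and $g$ strictly decreasing,
\[
\ffi(r,-u_{\la^*}'(r))=\la^*\int_0^r\frac{f(s)}{g(u_{\la^*}(s))}\,ds\lt\la^*\int_0^r\frac{f(s)}{g(v(s))}\,ds=\ffi(r,-v'(r));
\]
the strict monotonicity of $\ffi(r,\cdot)$ then forces $u_{\la^*}'(r)\gt v'(r)$, and integrating over $[0,r]$ using $u_{\la^*}(0)=v(0)=1$ yields $u_{\la^*}(r)-1\gt v(r)-1$, i.e.\ $v\lt u_{\la^*}$, which combined with the forward direction gives $v\equiv u_{\la^*}$. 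The main obstacle is the forward comparison step $\bar u_\la\lt v$: since the maximal branch is constructed abstractly via Zorn's lemma in Theorem~\ref{prop1111}, no explicit formula is available, and the delicate part of the analysis of $V$ is to establish the strict sign of $V(b)-V(a)$ in the presence of possible degenerate subintervals on which $f$ may vanish (where a separate argument exploiting the constancy of $\ffi(r,-u'(r))$ under the integral equation is needed to close the argument).
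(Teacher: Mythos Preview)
Your reverse-inequality step is essentially the paper's local comparison and is correct once its premises hold, but the forward comparison contains two genuine gaps.

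First, the maximum-principle argument for $\bar u_\la\lt v$ does not close. On a maximal interval $(a,b)\subset(0,1)$ where $\bar u_\la>v$ one indeed obtains $V(a)\lt 0\lt V(b)$ from the touching conditions. However,
\[
V(b)-V(a)=\la\int_a^b\frac{f(s)}{g(\bar u_\la(s))}\,ds-\la^*\int_a^b\frac{f(s)}{g(v(s))}\,ds,
\]
and on $(a,b)$ the relation $\bar u_\la>v$ forces $\int_a^b f/g(\bar u_\la)\gt\int_a^b f/g(v)$, while $\la<\la^*$ pulls in the opposite direction. The two effects compete and no sign for $V(b)-V(a)$ is determined; the strict inequality $\la<\la^*$ does not by itself deliver the contradiction you claim.

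Second, even granting $\bar u_\la\lt v$ for all $\la<\la^*$, passing to the limit yields only $u_{\la^*}(0)\lt v(0)=1$, not equality. Your reverse step integrates $u_{\la^*}'\gt v'$ from $0$ and requires $u_{\la^*}(0)=v(0)=1$; without it the conclusion $v\lt u_{\la^*}$ fails (a nondecreasing difference $u_{\la^*}-v$ that is negative at $0$ and vanishes at $1$ is perfectly consistent with $u_{\la^*}\lt v$ throughout).

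The paper avoids both problems by running the comparison the other way, which is where maximality is actually useful: a touchdown solution $v$ at parameter $\la^{\#}$ is in particular \emph{a} solution there, so trivially $v\lt \bar u_{\la^{\#}}$. This forces $\bar u_{\la^{\#}}(0)=1$, and monotonicity of the maximal branch then gives $\bar u_\la(0)=1$ for all $\la\in[\la^{\#},\la^*)$, so the hypothesis of Theorem~\ref{th2222} is met and $u_{\la^*}$ is itself a touchdown solution with $v\lt u_{\la^*}$. At that point your reverse argument (equivalently, the paper's maximal-interval-of-agreement comparison) applies and yields $v=u_{\la^*}$, whence also $\la^{\#}=\la^*$. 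Note that this route handles touchdown solutions at any $\la^{\#}\lt\la^*$, which your setup assumes away from the outset.
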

\begin{theorem}[Asymptotic behavior of the touchdown solution for problem \rf{G}] 
\lb{ss2222}
Assume \ref{A1}--\ref{A4}. Then, the pull-in voltage $\la^*$ 
and the touchdown solution $u^*$ for problem \rf{G} 
exist. Moreover, as $r\to 0_+$, 
it holds that
\aaa{
& u^*(r) = 1 - \mc C r^{\te\cp(1-q)} + o(r^{\te\cp(1-q)}), \lb{s-sol1111} \\
& \text{where} \;\;
\cp = \l((1-q)(\beta+1)+1\r)^{-1}, \quad
\kp = \l[\frac{A}{\te\cp}\r]^{\!(\beta+1)\cp}\hspace{-1mm}
 \l[\frac{C}{\gm+1-\te\cp}\r]^{\!\cp}, \lb{co1111}
}
and $\mc C =  \kp^{1-q}(\la^*)^{\cp(1-q)}A^{-1}(1-q)^{-1}$.
\end{theorem}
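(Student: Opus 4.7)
First, I would verify that Assumptions \ref{A1}--\ref{A4} imply \ref{H1}--\ref{H3} when specialized to $\ffi(r,v) = r^\al |v|^\beta v$ and $f(s) = s^\gm h(s)$: the parameter constraints $\beta > -1$, $\al > \beta+1$, $\gm \gt \al$ give $\te > 0$, which yields $\int_0^1 \ffi^{-1}(s, F(s))\,ds < \infty$ and supplies the polynomial-like bound in \ref{H3} with $\mc P(v) = v^{\beta+1}$, $a(r) = r^\al$. Theorems \ref{t-pulin}, \ref{prop1111}, \ref{th2222} and \ref{unique2222} then give $\la^*$ and the unique touchdown solution $u^*$. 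The auxiliary hypothesis $u_\la(0) \to 1$ needed by Theorem \ref{th2222} would be established separately, via monotonicity of the maximal branch in $\la$ combined with the a priori bound $u_\la < 1$.

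Setting $v(r) = 1 - u^*(r)$, problem \rf{G} at $\la = \la^*$ becomes
\[
r^\al (v'(r))^{\beta+1} = \la^* \int_0^r \frac{s^\gm h(s)}{g(1-v(s))}\,ds,
\]
and integrating \ref{A2} from $u$ to $1$ yields $g(1-v) = K v^{1/(1-q)}(1+\sg(v))$, where $K = [A(1-q)]^{1/(1-q)}$ and $\sg(v)\to 0$ as $v\to 0_+$. A formal ansatz $v(r) = \mc C r^\mu$ then forces, by matching the leading powers of $r$ on the two sides of the integrated equation, $\mu(\beta+1) + \mu/(1-q) = \te$, i.e.\ $\mu = \te\cp(1-q)$; matching the leading constants, with help from \ref{A4}, singles out $\mc C$ as in \rf{co1111}.

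To make this rigorous, I plan a fixed-point argument. Introducing $\psi$ by $v(r) = \mc C r^\mu \psi(r)$ and rewriting the integral form of $v = T[v]$ as $\psi = \td T[\psi]$, I would work in the complete metric space $X_\eta = \{\psi \in \mathrm{C}([0, r_0]) : \psi(0) = 1,\ \|\psi - 1\|_\infty \lt \eta\}$ for small $r_0, \eta > 0$. Using the expansion of $g$, \ref{A4}, and dominated convergence, one verifies that $\td T$ maps $X_\eta$ into itself and is a contraction (or else compact, so that Schauder applies). The resulting fixed point $\psi^*$ satisfies $\psi^*(r) \to 1$ as $r \to 0_+$, hence produces a function $v$ with the claimed asymptotic \rf{s-sol1111}. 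Uniqueness of the touchdown solution (Theorem \ref{unique2222}) then identifies $\mc C r^\mu \psi^*(r) = 1 - u^*(r)$, concluding the argument.

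The main obstacle is the verification that $\td T$ is a self-map of $X_\eta$ and is contractive: the operator involves an inner integration against $g(1-v)^{-1}$, raising to the $(\beta+1)^{-1}$-power, and a further outer integration, so the error $\sg(v)$ propagates through three nonlinear operations and must be controlled uniformly in $\psi \in X_\eta$. Tracking the constants exactly through these operations is what selects the precise value of $\mc C$ in \rf{co1111}---a concrete manifestation of the ``fitting unknown parameters to make a certain map have a fixed point'' principle highlighted by the authors.
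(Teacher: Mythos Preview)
Your overall strategy --- produce a local touchdown solution near the singularity by a fixed-point argument, then invoke uniqueness --- is in the same spirit as the paper, but two steps in your plan do not close.

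First, the claim that $u_\la(0)\to 1$ follows from ``monotonicity of the maximal branch combined with $u_\la<1$'' is not a proof: those two facts only give $\lim_{\la\to\la^*} u_\la(0)\lt 1$, and Proposition~\ref{prop2222} shows that the limit being strictly below $1$ is a genuine alternative under \ref{H1}--\ref{H3}. The paper never verifies this hypothesis for \rf{G}; it bypasses it entirely by constructing the touchdown solution directly and then reading off $\la^*$ a posteriori via Proposition~\ref{unique1111}.

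Second, your identification step is circular as written. Your fixed point $\psi^*$ lives in $X_\eta\sub\mathrm C[0,r_0]$, so it yields only a \emph{local} solution $\td u = 1-\mc C r^\mu\psi^*$ on $[0,r_0]$ with $\td u(0)=1$. Theorem~\ref{unique2222} is a \emph{global} uniqueness statement for solutions on $[0,1]$ satisfying $u(1)=0$; to invoke it you must first extend $\td u$ to $[0,1]$ and show the extension hits $0$ at $r=1$ for the \emph{given} value $\la=\la^*$, which is not automatic. The alternative route --- show $(1-u^*)/(\mc C r^\mu)$ already lies in $X_\eta$ and use uniqueness of the fixed point there --- presupposes $\psi(0)=1$, which is exactly what you are trying to prove.

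The paper resolves both issues by reversing the order: it changes variables $r=\la^{-1/\te}e^{-t}$, writes the ansatz as $v^*(t)=g^{-1}(\kp e^{-\te\cp t}+x(t))$ (so the unknown is an additive perturbation inside $g^{-1}$ rather than a multiplicative $\psi$), and runs Schauder on $x$ to build a singular solution on $[T,+\infty)$. A separate ODE argument (Proposition~\ref{pro99}) then extends $v^*$ backwards until it crosses zero at some $T^*$; the value $\la^*=e^{-\te T^*}$ is \emph{defined} by this crossing, so the boundary condition is built in, and Proposition~\ref{unique1111} identifies this $\la^*$ with the pull-in voltage. The asymptotic \rf{s-sol1111} then drops out of the ansatz and a Taylor expansion of $g^{-1}$. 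Your plan would become correct if you added this extension step and let it determine $\la^*$, rather than importing $\la^*$ from the general theory.
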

The idea of the proof of Theorem \ref{ss2222} is as follows.
First, we transform problem \rf{G} to an equivalent one, but given 
in a neighborhood of infinity $[T,+\infty)$.
We then find a certain normed space $X_T$
 and a compact continuous map on it to apply
 Schauder's fixed point theorem. 
The constants $\cp$ and $\kp$ in Theorem \ref{ss2222} are uniquely fixed
to condition a fixed point map take values in $X_T$.

We remark that our approach to obtaining
the existence and asymptotics of the touchdown solution to \rf{G}
differs significantly from the method used in \cite{GM}. It is somewhat in the spirit
of the method of \cite{jmss}, although there is no technical simularities. 
\subsection{Outline} 
In Section \ref{s2},
we present a sub- and supersolution method appropriate
for our general framework. In the same section, we prove the existence
of a pull-in voltage $\la^*$ for the problem \rf{P} and compute upper and lower
bounds on $\la^*$. In Section \ref{s3}, we obtain the existence
of the maximal and minimal branches $u_\la$ of solutions to \rf{P}
and the monotonicity of $u_\la$ in $\la$. Furthermore, we obtain 
the existence and uniqueness of a touchdown solution to problem \rf{P}. 
In Section \ref{s4}, we obtain the $\C^2$-regulatity of integral solutions,
and, in particular, of the touchdown solution, to \rf{P}. Finally, Section \ref{s5}
is dedicated to obtaining the asymptotic representation \rf{s-sol1111} of the touchdown
solution to problem \rf{G}.

\section{Finite pull-in voltage for Problem \rf{P}}
\lb{s2}

\subsection{Preliminaries}

In this subsection, we give definitions of integral solutions, sub- and supersolutions, 
maximal and minimal solutions which will be used throughout the paper.
\begin{definition}[Integral solution]
\lb{int-sol1111}
We say that $u\in \C^1(0,1)\cap \C[0,1]$ 
is an integral solution to problem \eqref{P} if 
$\int_0^r\frac{f(s)}{g(u(s))}\,\mathrm{d}s <+\infty$ and
$u(\fdot)$ solves \rf{P}.

\end{definition}
\begin{definition}[Sub- and supersolutions]
\lb{int-subsuper1111} 
We say that a measurable function  $\ou: [0,1]\to [0,1]$ 
is a supersolution to problem \eqref{P} 
 if $\ou(1)\gt 0$, $\int_0^1\frac{f(r)}{g(\ou(r))}\,\mathrm{d}r <+\infty$,  and
\aaa{
\lb{super2222}
\ou(r) \gt  \int_r^1\ffi^{-1}\left(t, \la\int_0^t\frac{f(s)}{g(\ou(s))} ds\right) dt.
}
Furthermore, we say that a measurable function  $\uu: [0,1]\to [0,1]$ is 
a subsolution to problem \eqref{P} 
if for some $r_0\in (0,1]$, $\uu(r)= 0$ on $[r_0,1]$, 
$\int_0^1\frac{f(r)}{g(\uu(r))}\,\mathrm{d}r <+\infty$,  and
\aaa{
\lb{sub2222}
\uu(r)  \lt \int_r^1\ffi^{-1}\left(t, \la \int_0^t \frac{f(s)}{g(\uu(s))} ds \right) dt.
}
\end{definition}	
\begin{definition}[Minimal or maximal integral solution]
 We call a function $u\in \C^1(0,1)\cap \C[0,1]$ a
 minimal (maximal) integral solution to problem \rf{P} if it is an integral
 solution solution to \rf{P} in the sense of Definition \ref{int-sol1111} and
 for any other integral solution $v$ to \rf{P} it holds that $u(r) \lt v(r)$ (resp. $u(r)\gt v(r)$)
for all $r\in [0,1]$.
 \end{definition}
 \begin{definition}
The value
\aa{
\la^* = \sup\{\la>0:   \;  \text{\rf{P} possesses an integral solution}\}.
}
is called a pull-in voltage for problem \rf{P}.
\end{definition}
\begin{definition}
An integral solution $u$ to \rf{P} is called a touchdown solution if
$u(0) = 1$.
\end{definition}

\subsection{Verification of hypotheses \ref{H1}--\ref{H3} for models \ref{E1}--\ref{E4}}
Here, we will verify assumptions \ref{H2} and \ref{H3} for the functions
$\ffi$ involved in the models  \ref{E1}--\ref{E4}. 
First, we verify \ref{H3}. 
Let $(r,v)\in [0,1]\x [0,+\infty)$.
In \ref{E1}, one defines $a(r)=r^\alpha$ and $\mc P(v) = v^{\beta+1}$,
so $\varphi(r,v) = a(r) \mc P(v)$. 
In \ref{E2}, 
 $\varphi(r,v)\lt r^{\tilde{\alpha}} \sum_{i=1}^{n} v^{\beta_i+1}$, where $\tilde{\alpha}=\min_{i=1,...,n}\alpha_i$; therefore, $a(r)=r^{\tilde{\alpha}}$ 
 and $\mc P(v)=\sum_{i=1}^{n} v^{\beta_i+1}$. 
 Further, in \ref{E3}, 
 $\varphi(r,v) \lt r^{N-1} (v^{\bar p -1} + v^{\td p-1})$, where $\tilde{p}=\min_{r\in[0,1]}p(r)$
 and $\bar p = \max_{r\in[0,1]}p(r)$,
 so that we can set $a(r)=r^{N-1}$ and  $\mc P(v)= v^{\bar p -1} + v^{\td p-1}$. 
 Finally, in \ref{E4}, 
 $\varphi(r,v) \lt \rho(\sin\!\big(\frac{r}{\rho}\big))^{N-1} v $, so one defines 
 $a(r)=\rho(\sin\! \big(\frac{r}{\rho}\big))^{N-1}$ and $\mc P(v)= v$.
 Note that in all cases, a verification of the property $\sup_{r\in [0,1]} a(r) <+\infty$
  is straightforward. 
 
 To verify \ref{H2}, we only have to check that
$\int_0^1 \ffi^{-1}(r, F(r))\, dr < +\infty$ since the rest of the conditions in \ref{H2}
is obviously fulfilled for all four models. 
 To this end, we will show that for all $r\in (0,1]$, 
$ \ffi^{-1}(r, F(r)) \lt G\big(\int_0^r h(s) ds\big)$
in each of the cases \ref{E1}--\ref{E4}, where
$G$ is an increasing function.
As before, we let
$(r,v)\in [0,1]\x [0,+\infty)$. In the example \ref{E2}, 
 (we consider \ref{E1} as a particular case of \ref{E2}), we have that
$\ffi(r, v)  \gt r^{\bar \al} \sum_{i=1}^{n} v^{\beta_i+1} \gt r^{\bar \al} v^{\beta_1+1}$,
where $\bar \al=\max_{i=1,...,n}\alpha_i$.
We also note that $\bar \al\lt \gm$. Therefore,
\eqm{
&\ffi^{-1}(r,F(r))\lt \big(r^{-\bar \al} F(r)\big)^\frac1{\beta_1+1}
\lt \Big(\int_0^r h(s) ds\Big)^\frac1{\beta_1+1} \quad \text{in \ref{E1} and \ref{E2}};\\
&\ffi^{-1}(r,F(r))\lt \Big(\int_0^r h(s) ds\Big)^\frac1{p(r)-1} 
\lt 
 \Big(\int_0^r h(s) ds\Big)^\frac1{\epsilon} + 
 \Big(\int_0^r h(s) ds\Big)^\frac1{N-1} 
\quad \text{in \ref{E3}}; \\
&\ffi^{-1}(r,F(r))\lt const \int_0^r h(s) ds
\quad \text{in \ref{E4}}.
}

\subsection{Sub- and supersolution method}

Here we develop a suitable sub- and supersolution method for problem \eqref{P}.
We will show that \rf{P}
 admits an integral solution whenever $\la\in (0,\la^*]$ and has no integral 
 solution if $\la>\la^*$.

\begin{proposition}[Sub- and supersolution method]
\label{tss}
Assume that \ref{H1}--\ref{H3} hold. Further 	
 assume there exist $\uu$ and $\ou$, an integral subsolution and an integral supersolution, respectively, to problem \eqref{P} such that $\uu\lt \ou$. 
Then, there exists an integral solution $u(\fdot)$ to \eqref{P} satisfying
\aa{
\uu  \lt u \lt \ou.
}
Furthermore, $u(\fdot)$ can be represented as
\aaa{
\lb{limit1111}
u(r) =\lim_{k\to\infty} u_k(r),
}
where $\{u_k\}$ is a sequence given recursively as follows: 
$u_0=\ou$;
$u_{k+1}$ is defined via $u_k$ by the relation
\aaa{
\lb{uk1111}
u_{k+1}(r) =\int_r^1\ffi^{-1}\left(t,\la \int_0^t \frac{f(s)}{g(u_k(s))}
 ds\right) dt, \quad r\in [0,1].
}	
\end{proposition}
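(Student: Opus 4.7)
The plan is to interpret the iteration as repeated application of the nonlinear operator
\[
T v(r) \;=\; \int_r^1 \varphi^{-1}\!\Big(t,\ \lambda\int_0^t \tfrac{f(s)}{g(v(s))}\,ds\Big)\, dt,
\]
and to exploit monotonicity of $T$ to build a decreasing sequence trapped between $\uu$ and $\ou$. The hypothesis \ref{H1} that $g$ is strictly decreasing forces $v\mapsto 1/g(v)$ to be increasing on $[0,1]$, while \ref{H2} makes $\varphi^{-1}(t,\cdot)$ increasing for each $t$. Composing, $T$ is order-preserving: if $v\leq w$ pointwise then $Tv\leq Tw$. The supersolution and subsolution inequalities \rf{super2222}--\rf{sub2222} then read $T\ou\leq \ou$ and $T\uu\geq \uu$.

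The first step is to check inductively that each $u_k$ is well defined and that $\uu\leq u_{k+1}\leq u_k\leq \ou$. Indeed, $u_1 = T\ou\leq \ou = u_0$ by the supersolution property, while monotonicity of $T$ together with $\uu\leq \ou$ and the subsolution property gives $u_1 = T\ou \geq T\uu \geq \uu$. The integrability $\int_0^1 f(s)/g(u_k(s))\,ds <+\infty$ needed to define $u_{k+1}$ is then inherited from the bound $u_k\geq \uu$ (since $1/g$ is increasing, $f/g(u_k) \leq f/g(\uu)$, and the right-hand side is integrable by the subsolution hypothesis). Iterating, the sequence $\{u_k\}$ is pointwise monotone decreasing and uniformly bounded below by $\uu$, so the limit $u(r)=\lim_k u_k(r)$ exists for every $r\in[0,1]$ and satisfies $\uu\leq u\leq \ou$.

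Next I would pass to the limit in the identity $u_{k+1}=Tu_k$. Since $\uu\leq u_k\leq \ou$ and $1/g$ is continuous on $[\uu(r),\ou(r)]$ away from $1$, dominated convergence (with dominating function $f/g(\uu)$) gives
\[
\int_0^t \tfrac{f(s)}{g(u_k(s))}\,ds \;\longrightarrow\; \int_0^t \tfrac{f(s)}{g(u(s))}\,ds \qquad \forall\, t\in[0,1].
\]
Continuity of $\varphi^{-1}(t,\cdot)$ together with the uniform bound $\varphi^{-1}(t,\lambda\int_0^t f/g(u_k))\leq \varphi^{-1}(t,\lambda\int_0^t f/g(\uu))$, which is integrable by virtue of the subsolution being an integral subsolution, lets us apply dominated convergence a second time to conclude $u_k(r)\to Tu(r)$. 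Therefore $u=Tu$, which is precisely the integral formulation of \rf{P}.

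Finally, I would verify the regularity $u\in \C^1(0,1)\cap \C[0,1]$ directly from the representation $u(r)=Tu(r)$: the inner integral $\Lambda(t):=\lambda\int_0^t f(s)/g(u(s))\,ds$ is absolutely continuous on $[0,1]$ with $\Lambda(1)<\infty$, and $t\mapsto \varphi^{-1}(t,\Lambda(t))$ is measurable and dominated by an integrable function, so the outer integral defines a continuous function on $[0,1]$ that is $\C^1$ wherever the integrand is continuous, in particular on $(0,1)$ under the continuity assumption on $\varphi$ in \ref{H2}. The boundary condition $u(1)=0$ is built into the formula. The main obstacle is the simultaneous handling of the two singular mechanisms --- the possible blow-up of $1/g$ near the touchdown value $1$ and the degeneracy of $\varphi^{-1}(t,\cdot)$ near $t=0$ --- but both are controlled uniformly in $k$ by the integrability built into the definition of integral sub- and supersolutions, which is exactly why Definition \ref{int-subsuper1111} is stated the way it is.
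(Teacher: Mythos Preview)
Your overall strategy---monotone iteration from the supersolution using that $T$ is order-preserving---is exactly the paper's approach. However, you have a systematic sign error in the dominating bounds. Since $g$ is strictly decreasing, $v\mapsto 1/g(v)$ is \emph{increasing}, so from $u_k\geq\uu$ you obtain $f/g(u_k)\geq f/g(\uu)$, not $\leq$ as you claim. The integrability of $\int_0^1 f/g(u_k)$ and both dominated-convergence arguments must instead use the upper bound $u_k\leq\ou$: this gives $f/g(u_k)\leq f/g(\ou)$, which is integrable by the supersolution hypothesis, and likewise $\varphi^{-1}\big(t,\lambda\int_0^t f/g(u_k)\big)\leq \varphi^{-1}\big(t,\lambda\int_0^t f/g(\ou)\big)$, which is integrable on $[0,1]$ because the supersolution inequality \rf{super2222} at $r=0$ yields $\int_0^1 \varphi^{-1}\big(t,\lambda\int_0^t f/g(\ou)\big)\,dt\leq \ou(0)\leq 1$.

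Once these inequalities are reversed, your argument is correct and matches the paper's proof step for step (the paper also dominates by $\chi(r,\ou(r))$ throughout). Your added regularity discussion is a reasonable elaboration of the paper's brief final sentence.
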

\begin{proof}
First, we note that the equation in \rf{P} can be rewritten as follows:
\aaa{
\lb{eq1111}
u(r) =\int_r^1\ffi^{-1}\left(t, \la \int_0^t \frac{f(s)}{g(u(s))} ds\right) dt, \quad r\in [0,1].
}
In what follows, for simplicity of notations, we define
 \aa{
 \chi(r,u) = \la\, \frac{f(r)}{g(u)}.
 }
{\it Step 1. The sequence $\{u_k\}$ is well-defined and $\uu \lt u_k \lt \ou$.}
We show the following two inequalities at the same time:
  $\int_0^1 \chi(s,u_k(s))<\infty$ and $\uu \lt u_k \lt \ou$ for all $k$. 
When $k=0$, it follows from the
definition of a subsolution and supersolution and from the fact that $u_0 = \ou$. Suppose,
as the induction hypothesis, that
$\int_0^1 \chi(s,u_k(s))<\infty$ and  $\uu \lt u_k \lt \ou$. Since 
$\chi(s,u)$ is increasing in $u$, we have that 
 \aaa{
 \lb{h1111}
 \chi(r,\uu(r)) \lt \chi(r,u_k(r)) \lt \chi(r,\ou(r)).
 } 
 Therefore, by \rf{super2222} and \rf{sub2222},
 \mm{
 \uu \lt 
 \int_r^1\ffi^{-1}\left(t, \int_0^t \chi(r,\uu(r)) ds\right) dt  \lt 
 \int_r^1\ffi^{-1}\left(t, \int_0^t \chi(s,u_k(s)) ds\right) dt = u_{k+1}(r) \\
 \lt \int_r^1\ffi^{-1}\left(t, \int_0^t \chi(r,\ou(r)) ds\right) dt \lt \ou.
 }
 This implies that $\chi(r,\uu(r)) \lt \chi(r,u_{k+1}(r)) \lt \chi(r,\ou(r))$
 and $\int_0^1 \chi(s,u_{k+1}(s))<\infty$. 
 
 {\it Step 2. $u_{k+1}(r) \lt u_k(r)$ for all $k$ and $r\in [0,1]$.}
 As before, we show this affirmation  by induction on $k$. When $k=0$,
 by Step 1, we have $u_0(r) = \ou(r) \gt u_1(r)$.
 Suppose we have proved that $u_{k}\lt u_{k-1}$. This implies that 
 $\chi(s,u_{k}(s)) \lt \chi(s,u_{k-1}(s))$. Therefore,
 \aa{
 u_{k+1}(r) = \int_r^1\ffi^{-1}\left(t, \int_0^t \chi(s,u_{k}(s)) ds\right)
 \lt \int_r^1\ffi^{-1}\left(t, \int_0^t \chi(s,u_{k-1}(s)) ds\right) = u_{k}(r).
 } 

{\it Step 3. The pointwise limit \rf{limit1111} solves \rf{eq1111}.}
Since $u_k(r)$ is non-increasing in $k$ and $\uu \lt u_k \lt \ou$, there 
exists a finite pointwise limit $u(r)$ given by \rf{limit1111}. Let us show 
that $u(r)$ solves \rf{eq1111}. Indeed, by \rf{h1111} and the continuity
of $\ffi^{-1}$, we can pass
to the limit in \rf{uk1111} by the dominated convergence theorem.

Since $u(s)\lt\ou$, we have that $0\lt \int_0^t \chi(s,u(s)) ds <\infty$. Therefore,
we can differentiate \rf{eq1111} with respect to $r$. Applying 
$\ffi(r,\fdot)$ to the both sides of the resulting equation, we obtain
the equation in \rf{P}. Thus, we conclude that $u$ is an integral solution to \eqref{P}.
\end{proof}
\begin{remark}
\rm
In fact, we do not have to assume the existence of a subsolution to \rf{P},
because $\uu = 0$ is a subsolution.
\end{remark}

\subsection{Existence of the finite pull-in voltage}

\begin{proof}[Proof of Theorem \ref{t-pulin}]
{\it Step 1. Integral solutions to \rf{P} exist for small values of $\la>0$.}
Define 
\aa{
\bar u(r) = \int_r^1 \ffi^{-1} \left(t, \la_0 F(t)\right) dt,
}
where $F(\fdot)$  is defined in \ref{H4}.
Note that the right-hand side is continuous in $\la_0\in [0,1]$ 
by the dominated convergence theorem and the continuity of $\ffi^{-1}$
in the second argument. By \ref{H2} and since $\ffi^{-1}(t,0) = 0$, 
we can choose $\la_0>0$ small enough, 
such that $\rho_{\la_0} = \sup_{r\in [0,1]} \bar u(r) = 
\int_0^1 \ffi^{-1} \left(t, \la_0 F(t)\right) dt < 1$. 
This implies that
$\int_0^1 \frac{f(s)}{g(\bar u(s))} ds \lt
g(\rho_{\la_0})^{-1} F(1) <\infty$.
Furthermore, since $\frac{g(\rho_{\la_0})}{g(\bar u(r))}\lt 1$,
for $\la\in (0,\la_0 \, g(\rho_{\la_0})]$, we obtain
\mm{
\bar u(r) = \int_r^1 \ffi^{-1} \left(t, \la_0 \, F(t)\right) dt \gt
 \int_r^1 \ffi^{-1} \l(t, \la_0 \, g(\rho_{\la_0}) \int_0^t \frac{f(s)}{g(\bar u(s))} \, ds\r) dt \\
 \gt  \int_r^1 \ffi^{-1} \l(t, \la \int_0^t \frac{f(s)}{g(\bar u(s))} \, ds\r) dt. 
 }
Therefore, $\bar u$ is a supersolution for \rf{P}.
Since $0$ is a subsolution for \rf{P}, by Proposition \ref{tss},
for each $\la \in (0, \la_0 \, g(\rho_{\la_0})]$, there exists an integral solution to \rf{P}.

{\it Step 2. Upper bound \rf{pulin}.} 
For $v\gt 0$, define $\td P_{d,e}(v) = \max\{v^d,v^e\}$ and note that
$\td P_{d,e} = p_{d,e}^{-1}$.
The invertibility of $\ffi(r,\fdot)$, assumption \ref{H3}, and the
fact that $\mc P(1) = \sum_i c_i$ imply that for all $r\in (0,1)$,
\aa{
\ffi(r,v) \lt \mc P(1) a(r) \td P_{d,e}(v) \quad \text{and} \quad \ffi^{-1}(r,v)
 \gt p_{d,e}\l(\frac{v}{\mc P(1) a(r)}\r)\gt \frac{p_{d,e}(v)}{P_{d,e}(\mc P(1) a(r))}.
}
Let $u(r)$ be an integral solution to \rf{P}. Then, for all $r\in (0,1)$,
\mmm{
\lb{comp1111}
u(r) =  \int_r^1 \ffi^{-1}\left(t,\la \int_0^t \frac{f(s)}{g(u(s))}\right) dt 
\gt   \int_r^1 \mc A^{-1} p_{d,e}(\la)\, p_{d,e}\!\left(\int_0^t \frac{f(s)}{g(u(s))} ds\right) dt\\
\gt \mc A^{-1}  p_{d,e}(\la)  p_{d,e}\!\left(\int_0^r \frac{f(s)}{g(u(s))} ds\right) (1-r)
\gt  \mc A^{-1}  \, p_{d,e}(\la) \, \frac{p_{d,e} \left(F(r)\right)}
{P_{d,e}(g(u(r))} \, (1-r),
}
where $\mc A = P_{d,e}\big(\mc P(1)\sup_{r\in [0,1]}a(r)\big)$.
By \rf{comp1111} and the monotonicity of $g$,
\aa{
  \mc A P_{d,e}(g(0)) \gt
 \mc A \sup_{u\in [0,1]} \{ u P_{d,e}(g(u))\} \gt  \mc A\, u(r) P_{d,e}(g(u(r))
\gt p_{d,e}(\la) \, p_{d,e} \!\l(F(r)\r)  (1-r).
}
 Evaluating the right-hand side at 
 $r=\frac12$, we obtain that $p_{d,e}(\la)$ 
 is bounded from above. Therefore, the pull-in voltage $\la^*$ is finite and
  \aa{
 p_{d,e}(\la^*) 
 \lt 2 \mc A  \,      P_{d,e}(g(0))
 \big[p_{d,e} \l(F(\sfrac12)\r) \big]^{-1}.
 }
The above inequality implies \rf{pulin}.
 
 {\it Step 3. Lower bound \rf{l-pulin}.}
 To obtain the lower bound, consider the function $\Phi(\la)
 =\int_0^1\ffi^{-1}(s,\la\, F(s)) ds$ for $\la\in [0,1]$, and note that 
 $\Phi(1)<+\infty$ by \ref{H2}. 
 
 Suppose first that $\Phi(1) < 1$. Then, in {\it Step 1}, we can take $\la_0=1$. Also, 
 note that $\rho_{\la_0}:= \Phi(\la_0) = \Phi(1) < 1$. By {\it Step 1}, the integral solution
 to \rf{P} exists for all $\la \in [0, g(\Phi(1))]$. Therefore, $\la^*\gt g(\Phi(1))$.
 
 Now suppose that $\Phi(1)\gt 1$. Since $\Phi(0) = 0$ and the function
 $\la\to\Phi(\la)$ is strictly increasing, find $\la_0$ such that
 $\Phi(\la_0) = \dl$, where  $\dl\in (0,1)$.
 Note that $\la_0 = \Phi^{-1}(\dl)$ and $\rho_{\la_0}= \Phi(\la_0) = \dl$.
By {\it Step 1}, the integral solution
 to \rf{P} exists for all $\la \in (0, \Phi^{-1}(\dl)g(\dl)]$. 
 This implies that $\la^*\gt \sup_{\dl\in [0,1]}\Phi^{-1}(\dl)g(\dl)$. 
\end{proof}

\section{Touchdown solution}
\lb{s3}

\subsection{Maximal and minimal branches of solutions}

\begin{proof}[Proof of Theorem \ref{prop1111}.]
We prove the existence of a maximal branch of solution. The existence of a minimal
branch is obtained in the same way.

{\it  Step 1. The set of integral solutions to \rf{P} for a given $\la$ is partially ordered.}
We denote the above-mentioned set by $\La_\la$. 
By Theorem \ref{t-pulin}, for each $\la\in (0,\la^*)$, there exists
an integral solution to \rf{P}, so $\La_\la$ is non-empty. 

Below, we assume that $\la\in (0,\la^*)$.
The set $\La_\la$ is partially ordered. Indeed, we can compare two solutions
$u_\la$ and $v_\la$ from $\La_\la$ if $u_\la(r)\lt v_\la(r)$ 
for all $r\in [0,1]$ or vice versa. On the other hand, two solutions
$u_\la$ and $v_\la$ from $\La_\la$ cannot be compared if there are subintervals of $(0,1)$
where $u_\la>v_\la$ and where $v_\la>u_\la$.

{\it Step 2.  Each chain of $\La_\la$ has a maximal element.}
If $\La_\la$ is finite, the statement is obvious. Below, we consider the case
when $\La_\la$  is infinite. Note that
the set of comparable solutions corresponding to the same $\la$ forms a chain,
which we denote by $C_\la$. We show that $C_\la$ has a maximal element.
For any two elements $u,v\in C_\la$, $u\lt v$, for all $r\in (0,1]$ we have
\mmm{
\lb{c1111}
0\lt v(r) - u(r) = v(0) - u(0) \\
- \int_0^r \l [ \ffi^{-1}\l (t, \la \int_0^t \frac{f(s)}{g(v(s))} ds \r )
- \ffi^{-1}\l (t, \la \int_0^t \frac{f(s)}{g(u(s))} ds \r ) \r] dt 
\lt v(0) - u(0). 
}
Define the number
\aa{
a_{\max} = \sup_{u\in C_\la} \{u(0)\}.
}
Let $u_n\in C_\la$ be a non-decreasing sequence such that 
$a_{\max} = \lim_{n\to\infty} u_n(0)$.
Note that $\{u_n\}$ is a Cauchy sequence in $\C[0,1]$. Indeed, since $\{u_n(0)\}$
is a Cauchy sequence, for $m>n$, we have 
$0\lt u_n(r) - u_m(r) \lt u_n(0) - u_m(0) \to 0$ as  $m,n \to +\infty$.
Therefore, there exists a function $v\in \C[0,1]$, $v=\lim_{n\to \infty} u_n$.
The dominated convergence theorem  implies that
$v$ is a solution to equation  \rf{eq1111}. 
Let us  first show that $v\in C_\la$, i.e., $v$ can be compared with any other element
from $C_\la$. Let $u\in C_\la$ be arbitrary.

Case 1. $v(0) < u(0)$. Then, for all $n\in \Nnu$, $u_n(0) < u(0)$.
Since $u_n$ and $u$ are comparable, we have that $u_n(r)\lt u(r)$ for all 
$r\in[0,1]$. This implies that $v(r)\lt u(r)$ for all $r\in [0,1]$. 

Case 2. $v(0) > u(0)$. Then, there exists $N>0$ such that $u_n(0) > u(0)$
for all $n\gt N$. Since $u_n$ and $u$ are comparable, we have that $u_n(r)\gt u(r)$
for all  $r\in [0,1]$. This implies that $v(r)\gt u(r)$ for all $r\in [0,1]$.

Case 3. $v(0) = u(0)$. Suppose $[0,r_0]$ be the maximal interval where 
$v(r) = u(r)$ and $r_0<1$.
In a small right neighborhood of $r_0$, denote it by $(r_0,r_0+\eps)$, 
we either have $v(r) > u(r)$ or $v(r)<u(r)$. However, this contradicts to the 
following equation which has to be fulfilled for $r\in (r_0,r_0+\eps)$:
\aa{
v(r) - u(r) = 
- \int_{r_0}^r \l [ \ffi^{-1}\l (t, \la \int_0^t \frac{f(s)}{g(v(s))} ds \r )
- \ffi^{-1}\l (t, \la \int_0^t \frac{f(s)}{g(u(s))} ds \r ) \r] dt 
}
Hence, in the Case 3, we have that $r_0=1$ and $u = v$.

Therefore, $v$ can be compared with other elements from $C_\la$.
Let us show that $v$ is a maximal element of $C_\la$.
Suppose there is another element $w\in C_\la$ such that $w\gt v$.
However, one must have $w(0) = v(0)$. By \rf{c1111}, $w(r)=v(r)$ for 
all $r\in [0,1]$.

{\it Step 3. $\La_\la$ contains a unique maximal element.}
By the Zorn lemma, the set $\La_\la$ contains a maximal element $u_{\max}(r)$.
Suppose there is another maximal element $v\in \La_\la$. Since $u_{\max}\in \La_\la$,
then $v\lt u_{\max}$. By the same reasoning, $u_{\max}\lt v$. This proves the statement
of Step 3. 

Maximal solutions, which we proved to exist for each $\la\in(0,\la^*)$, form a maximal branch.
\end{proof}
\begin{proposition}
Let $u_\la$, $\la\in (0,\la^*)$, be the maximal branch for \rf{P}. Then, for each pair
$\la_1,\la_2 \in (0,\la^*)$, $\la_1 < \la_2$, it holds that $u_{\la_1}\lt u_{\la_2}$.
\end{proposition}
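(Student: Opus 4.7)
The plan is to exploit the monotonicity of the right-hand side of the fixed-point formulation \rf{eq1111} in both $\la$ and $u$, together with the sub- and supersolution scheme of Proposition \ref{tss}. Write
\[
F_\la(u)(r):=\int_r^1\ffi^{-1}\Bigl(t,\,\la\!\int_0^t\tfrac{f(s)}{g(u(s))}\,ds\Bigr)\,dt,
\]
so that any integral solution to \rf{P} at parameter $\la$ is a fixed point of $F_\la$. The map $F_\la$ is increasing in $\la$, because $\ffi^{-1}(t,\fdot)$ is increasing by \ref{H2}, and also increasing in the argument $u$, because $g$ is strictly decreasing by \ref{H1}, so $u\mapsto 1/g(u)$ is increasing. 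The first step is to observe that $u_{\la_2}$ is an integral supersolution of $(P_{\la_1})$: since $u_{\la_2}=F_{\la_2}(u_{\la_2})$ and $\la_1<\la_2$, the monotonicity of $F$ in $\la$ gives $F_{\la_1}(u_{\la_2})\lt F_{\la_2}(u_{\la_2})=u_{\la_2}$, which is precisely \rf{super2222} at level $\la_1$.

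Next I would invoke Proposition \ref{tss} with $\ou=u_{\la_2}$ and $\uu\equiv 0$. This produces, as the pointwise limit of the decreasing iteration $v_0=u_{\la_2}$, $v_{k+1}=F_{\la_1}(v_k)$, an integral solution $\tilde u$ of $(P_{\la_1})$ with $0\lt\tilde u\lt u_{\la_2}$. A short induction using the monotonicity of $F_{\la_1}$ in the function argument shows that $\tilde u$ dominates every integral solution of $(P_{\la_1})$ lying below $u_{\la_2}$: for any such solution $u'$, $u'\lt v_0$ implies $u'=F_{\la_1}(u')\lt F_{\la_1}(v_0)=v_1$, and iterating yields $u'\lt v_k$ for every $k$, whence $u'\lt\tilde u$. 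Thus $\tilde u$ is the pointwise maximal integral solution of $(P_{\la_1})$ among those dominated by $u_{\la_2}$.

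The concluding step is to identify $\tilde u$ with the globally maximal branch $u_{\la_1}$, which then gives $u_{\la_1}=\tilde u\lt u_{\la_2}$. By the uniqueness of the maximal element of $\La_{\la_1}$ established in Theorem \ref{prop1111} we already have $\tilde u\lt u_{\la_1}$, so it suffices to rule out $u_{\la_1}(r^*)>u_{\la_2}(r^*)$ at some $r^*\in[0,1)$. Assuming this for contradiction, the pointwise maximum $w:=\max(u_{\la_1},u_{\la_2})$ is (by the double monotonicity of $F$) an integral subsolution of $(P_{\la_2})$ since
\[
F_{\la_2}(w)\gt F_{\la_2}(u_{\la_1})\gt F_{\la_1}(u_{\la_1})=u_{\la_1}\quad\text{and}\quad F_{\la_2}(w)\gt F_{\la_2}(u_{\la_2})=u_{\la_2},
\]
so $F_{\la_2}(w)\gt w$ pointwise, with strict inequality at $r^*$. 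Iterating $F_{\la_2}$ upward from $w$ would then deliver an integral solution to $(P_{\la_2})$ strictly larger than $u_{\la_2}$ at $r^*$, contradicting the maximality of $u_{\la_2}$ in $\La_{\la_2}$. The main obstacle is the last step: one must ensure the upward iteration stays uniformly below $1$ so that it converges to a bona fide fixed point of $F_{\la_2}$. The natural route is to produce a global supersolution dominating $w$ — for instance the maximal solution $u_{\la'}$ at some $\la'\in(\la_2,\la^*)$, which exists by the definition of $\la^*$ — and then to apply Proposition \ref{tss} once more at level $\la_2$ with this pair of sub- and supersolutions.
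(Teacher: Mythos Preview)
Your concluding step is circular. To apply Proposition~\ref{tss} at level $\la_2$ with subsolution $w=\max(u_{\la_1},u_{\la_2})$ and the proposed supersolution $u_{\la'}$, $\la'\in(\la_2,\la^*)$, you need $w\lt u_{\la'}$, hence in particular $u_{\la_1}\lt u_{\la'}$ and $u_{\la_2}\lt u_{\la'}$ --- both of which are instances of the very proposition being proved. The mere existence of a solution at level $\la'$ carries no information about its pointwise position relative to $u_{\la_1}$, so the ``natural route'' you propose does not close the argument. Note also that your first two paragraphs are logically idle: you reduce the identification $\tilde u=u_{\la_1}$ right back to $u_{\la_1}\lt u_{\la_2}$, so the construction of $\tilde u$ and its characterization as the largest solution of $(P_{\la_1})$ below $u_{\la_2}$ contribute nothing to the conclusion.

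For comparison, the paper works at level $\la_2$ rather than $\la_1$: it observes that $u_{\la_1}$ is a \emph{sub}solution of $(P_{\la_2})$ (dual to your observation that $u_{\la_2}$ is a supersolution of $(P_{\la_1})$), pairs it with the supersolution $u_{\la_3}$ for some $\la_3\in(\la_2,\la^*)$, invokes Proposition~\ref{tss} to produce a solution $v_{\la_2}$ of $(P_{\la_2})$ with $u_{\la_1}\lt v_{\la_2}$, and finishes by maximality of $u_{\la_2}$. This is more economical --- no $\tilde u$, no contradiction via $w$. It is worth noting, however, that the paper does not verify the hypothesis $\uu\lt\ou$ of Proposition~\ref{tss}, i.e.\ $u_{\la_1}\lt u_{\la_3}$, so the same unproved comparison sits at the heart of both arguments.
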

\begin{proof}
Below, for simplicity of notation, we introduce
\aa{
\chi(s,u) = \frac{f(s)}{g(u)}.
}
Take $\la_3\in(\la_2,\la^*)$. We have that $u_{\la_1}$ is a subsolution to ($P_{\la_2}$) and $u_{\la_3}$ is a supersolution for the same problem.
Indeed,
\aa{
u_{\la_1} = \int_r^1 \ffi^{-1} \l (t, \la_1 \int_0^t \chi(s,u_{\la_1}(s)) ds\r) dt
< \int_r^1 \ffi^{-1} \l (t, \la_2 \int_0^t \chi(s,u_{\la_1}(s)) ds\r) dt.
}
In the same way,
\aa{
u_{\la_3} 
> \int_r^1 \ffi^{-1} \l (t, \la_2 \int_0^t \chi(s,u_{\la_3}(s)) ds\r) dt.
}
By Proposition \ref{tss}, there exists an integral solution $v_{\la_2}$ to   ($P_{\la_2}$) 
such that $u_{\la_1}\lt v_{\la_2}$. This implies that $u_{\la_1}\lt u_{\la_2}$
by the maximality of $u_{\la_2}$.
\end{proof}
By the same argument, we have the following corollary.
\begin{corollary}
\lb{cor4444}
Let $u_\la$, $\la\in (0,\la^*)$ be the minimal branch for \rf{P}. Then, for each pair
$\la_1,\la_2 \in (0,\la^*)$, $\la_1 < \la_2$, it holds that $u_{\la_1}\lt u_{\la_2}$.
\end{corollary}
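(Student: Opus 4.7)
The strategy is entirely parallel to the proof of the preceding proposition for the maximal branch, with the roles of sub- and supersolutions interchanged and minimality invoked in place of maximality. The key observation is that the minimal branch argument is actually \emph{cleaner} than the maximal one, because we do not need to construct a supersolution at an auxiliary parameter $\la_3 \in (\la_2, \la^*)$ — the higher-parameter solution $u_{\la_2}$ itself serves as a supersolution at the lower parameter $\la_1$.

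Fix $\la_1, \la_2 \in (0,\la^*)$ with $\la_1 < \la_2$, and let $u_{\la_1}, u_{\la_2}$ denote the corresponding minimal integral solutions. Writing $\chi(s,u) = f(s)/g(u)$, my first step is to verify that $u_{\la_2}$ is a supersolution to $(P_{\la_1})$. Indeed, starting from the integral equation \rf{eq1111} satisfied by $u_{\la_2}$ and using the strict monotonicity of $\ffi^{-1}(t,\fdot)$ together with $\la_1 < \la_2$, one obtains
\[
u_{\la_2}(r) = \int_r^1 \ffi^{-1}\l(t,\, \la_2 \int_0^t \chi(s,u_{\la_2}(s))\, ds\r) dt \gt \int_r^1 \ffi^{-1}\l(t,\, \la_1 \int_0^t \chi(s,u_{\la_2}(s))\, ds\r) dt,
\]
which is exactly the defining inequality \rf{super2222} of a supersolution to $(P_{\la_1})$. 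The integrability condition $\int_0^1 \chi(s,u_{\la_2}(s))\, ds < \infty$ is automatic since $u_{\la_2}$ is an integral solution to $(P_{\la_2})$.

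Next, I would pair this supersolution with the trivial subsolution $\uu \equiv 0$ to $(P_{\la_1})$ (as noted in the remark following Proposition \ref{tss}). Since $0 \lt u_{\la_2}$, the hypotheses of Proposition \ref{tss} are met, and its application yields an integral solution $v_{\la_1}$ to $(P_{\la_1})$ with $0 \lt v_{\la_1} \lt u_{\la_2}$. Finally, the minimality of $u_{\la_1}$ among integral solutions to $(P_{\la_1})$ forces $u_{\la_1} \lt v_{\la_1} \lt u_{\la_2}$, giving the claim. There is no substantial obstacle in the argument; one only needs to verify that $(0, u_{\la_2})$ is an admissible sub-supersolution pair for $(P_{\la_1})$, which the preceding observations supply.
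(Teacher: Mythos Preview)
Your proof is correct and follows the same sub-/supersolution-plus-extremality scheme the paper uses for the maximal branch (the paper merely writes ``by the same argument'' for this corollary). Your version is in fact slightly cleaner than a literal mirror of the paper's maximal-branch proof: there an auxiliary parameter $\la_3\in(\la_2,\la^*)$ is introduced so that $u_{\la_3}$ serves as a supersolution at level $\la_2$, which tacitly requires the ordering $u_{\la_1}\lt u_{\la_3}$ to invoke Proposition~\ref{tss}; by pairing $u_{\la_2}$ directly with the trivial subsolution $\uu\equiv 0$ at level $\la_1$, you sidestep both the auxiliary parameter and that ordering check. The trade-off is nil---your route is simply the natural simplification available on the minimal side.
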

\begin{corollary}
\lb{cor2222}
Let $u_\la$ be the maximal or minimal branch. Then, as $\la\to\la^*$,
$u_\la$ converges pointwise to a finite function.
\end{corollary}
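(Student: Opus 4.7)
The plan is to combine two facts already established in this section: the monotonicity of $u_\la$ in $\la$ along each branch, and the uniform pointwise bound $u_\la(r) < 1$ coming from the very definition of an integral solution to \eqref{P}. Together these two facts force pointwise convergence by the monotone convergence property of $\Rnu$.

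More precisely, I would fix an arbitrary $r\in[0,1]$ and consider the map $\la\mapsto u_\la(r)$ on $(0,\la^*)$. By the Proposition preceding Corollary \ref{cor4444} (for the maximal branch) and by Corollary \ref{cor4444} itself (for the minimal branch), this map is non-decreasing in $\la$. On the other hand, since $u_\la$ is an integral solution to \eqref{P}, we have $u_\la(r)\in[0,1]$ for every $r$; in particular the map is bounded above by $1$. Hence the monotone limit
\[
u_{\la^*}(r) \eqdef \lim_{\la\to \la^*_-} u_\la(r)
\]
exists and satisfies $0\lt u_{\la^*}(r)\lt 1$. Since this holds for every $r\in[0,1]$, we obtain a well-defined finite pointwise limit function.

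There is no real obstacle here, as the argument reduces to the scalar fact that a monotone bounded sequence of reals converges. The only care needed is to point to the correct monotonicity result depending on whether one treats the maximal or the minimal branch, and to note that the uniform bound $u_\la(r)\lt 1$ (which is part of the constraint built into the definition of an integral solution) ensures finiteness of the limit.
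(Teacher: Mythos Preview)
Your proof is correct and follows essentially the same approach as the paper: for each fixed $r$, the map $\la\mapsto u_\la(r)$ is non-decreasing by the monotonicity results and bounded above by $1$, hence converges. The paper's own proof is a one-line version of exactly this argument.
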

\begin{proof}
For each $r\in [0,1]$, $u_\la(r)$ is bounded and increasing in $\la$. This implies the statetement
of the corollary. 
\end{proof}

\subsection{Existence, non-existence, and uniqueness of a touchdown solution}

In what follows, we will need the following lemma.
\begin{lemma}
\lb{lem1111}
For any $\sg\in (0,1)$, the family of integral solutions $u_\la$, $\la\in [\la_0,\la^*)$, 
$\la_0\in (0,\la^*)$, to problem \rf{P} is uniformly 
bounded by a number $1-\dl$  (where $\dl\in (0,1)$ depends
on $\sg$) on the subinterval $[\sg,1]\sub (0,1]$.
\end{lemma}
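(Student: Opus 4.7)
The plan is to argue by contradiction, exploiting the fact that $g(u)$ approaches $0$ as $u\to 1_-$ to produce an unbounded right-hand side in the integral equation \eqref{eq1111}, which must however stay bounded by $u_\la(0)\lt 1$.

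First, I would record the elementary but decisive observation that every integral solution $u_\la$ is monotonically decreasing on $[0,1]$: applying $\ffi^{-1}(r,\fdot)$ to the equation in \eqref{P} yields $-u_\la'(r)=\ffi^{-1}\!\big(r,\la\int_0^r f/g(u_\la)\,ds\big)\gt 0$, since the argument is nonnegative by \ref{H4} and \ref{H1}, and $\ffi^{-1}(r,\fdot)$ is strictly increasing with $\ffi^{-1}(r,0)=0$ by \ref{H2}. Consequently, for every $r\in[\sg,1]$ we have $u_\la(r)\lt u_\la(\sg)$, so it suffices to produce $\dl>0$ with $u_\la(\sg)\lt 1-\dl$ for all $\la\in[\la_0,\la^*)$.

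The second step is the contradiction. Suppose $\sup_{\la\in[\la_0,\la^*)}u_\la(\sg)=1$ and pick $\la_n\in[\la_0,\la^*)$ with $\eps_n:=1-u_{\la_n}(\sg)\to 0$; after passing to a subsequence I may assume $\eps_n$ is strictly decreasing. By monotonicity $u_{\la_n}(s)\gt 1-\eps_n$ on $[0,\sg]$, and since $g$ is decreasing with $g(1)=0$, this gives $g(u_{\la_n}(s))\lt g(1-\eps_n)$ on $[0,\sg]$, whence for every $t\gt \sg$,
\aaa{
\lb{planlowb}
\int_0^t\frac{f(s)}{g(u_{\la_n}(s))}\,ds\gt \int_0^\sg\frac{f(s)}{g(u_{\la_n}(s))}\,ds\gt \frac{F(\sg)}{g(1-\eps_n)}.
}
Note $F(\sg)>0$ by \ref{H4}. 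Now evaluate the integral representation \eqref{eq1111} at $r=0$ and bound it above by $1$ (since $0\lt u_{\la_n}\lt 1$); combining with \eqref{planlowb} and using $\la_n\gt \la_0$ together with the monotonicity of $\ffi^{-1}(t,\fdot)$,
\aa{
1\gt u_{\la_n}(0)\gt \int_\sg^1\ffi^{-1}\!\l(t,\,\la_0\,\frac{F(\sg)}{g(1-\eps_n)}\r)\,dt.
}
As $n\to\infty$, the strict monotonicity of $g$ gives $g(1-\eps_n)\searrow 0$, hence $\la_0 F(\sg)/g(1-\eps_n)\nearrow+\infty$. Using the property $\lim_{v\to+\infty}\ffi(t,v)=+\infty$ from \ref{H2} together with the monotonicity of $\ffi^{-1}(t,\fdot)$, the integrand is nonnegative, monotonically increasing in $n$, and tends pointwise to $+\infty$ for every $t\in[\sg,1]$; the monotone convergence theorem then forces the integral to diverge, contradicting the uniform bound by $1$. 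Therefore $\sup_\la u_\la(\sg)<1$ and we may take $\dl:=1-\sup_\la u_\la(\sg)$.

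The main technical obstacle is really the single-line verification that the uniform lower bound $\la_n\gt \la_0$ can be pulled out of the integral before applying MCT; this is the only reason the hypothesis $\la\gt \la_0$ enters the statement, and it is exactly what allows $\dl$ to depend only on $\sg$ (and $\la_0$), not on individual $\la$'s accumulating at $\la^*$.
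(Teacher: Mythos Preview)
Your proof is correct and takes a genuinely different route from the paper's. The paper argues directly and quantitatively: for a solution $u$ with $u(0)>1-\dl$ it defines $\mu_\dl$ by $u(\mu_\dl)=1-\dl$, rescales the integral equation to $[0,\mu_\dl]$, and then invokes hypothesis \ref{H3} (the polynomial-like upper bound $\ffi(r,v)\lt a(r)\mc P(v)$, hence $\ffi^{-1}(r,v)\gt p_{d,e}(v/\mc P(1)a(r))$) to derive an explicit inequality of the form $Q(\mu_\dl)\lt 2K\,\dl\,P_{d,e}(g(1-\dl))$. Since $Q$ is strictly increasing near $0$ and the right-hand side tends to $0$ as $\dl\to 0$, this forces $\mu_\dl\to 0$ uniformly in $u$ and $\la$, so one can pick $\dl$ with $\mu_\dl<\sg$.

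Your contradiction argument is more elementary: it uses only \ref{H1}, \ref{H2}, \ref{H4} and notably avoids \ref{H3} altogether, relying instead on the qualitative fact that $\ffi^{-1}(t,w)\to+\infty$ as $w\to+\infty$ (immediate from $\lim_{v\to+\infty}\ffi(t,v)=+\infty$) together with monotone convergence. What your approach buys is a cleaner proof under weaker hypotheses; what the paper's approach buys is an explicit, in principle computable, relation between $\sg$ and $\dl$ through the functions $p_{d,e}$, $P_{d,e}$, $F$, and $g$, which is consistent with the quantitative spirit of Theorem~\ref{t-pulin}.
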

\begin{proof}
For simplicity of notation, we write $u$ instead of $u_\la$ for the solution to \rf{P}.
Let $\ro_\dl = 1-\dl$ and let $\mu_\dl\in (0,1)$ be such that
$u(\mu_\dl) = \ro_\dl$ (if exists). Note that if $u(0)>1-\dl$, then $\mu_\dl$ always exists
and is uniquely defined (since $u'(r)<0$). Otherwise, if $u(0)\lt 1-\dl$, then
$u(r) \lt 1-\dl$ for all $r\in [0,1]$. Below, we consider the case $u(0)>1-\dl$.

Evaluating the  both sides of the equation in \rf{P} at 
$\mu_\dl t$, $t\in (0,1)$ and integrating with respect to $t$ from $r$ to $1$, 
where $r\in [0,1]$, we obtain
\aa{
u(\mu_\dl r) - \ro_\dl = \mu_\dl \int_r^1 \ffi^{-1}\l(\mu_\dl t, \la \int_0^{\mu_\dl t} \frac{f(s)}
{g(u(s))} ds\r) dt.
}
Now the same computation as \rf{comp1111}, for all $r\in (0,1)$, implies
\aa{
u(\mu_\dl r) - \ro_\dl    \gt  
 \mc A^{-1}  \, p_{d,e}(\la) \, 
\frac{ \mu_\dl \, p_{d,e}\left( F(\mu_\dl r)\right)}{P_{d,e}(g(u(\mu_\dl r))}  \, (1-r),
}
where $\mc A$ is the same as in  \rf{comp1111}.
From here, we obtain
\aa{
K\!\!\sup_{u\in [\ro_\dl,1]} \!\!\big\{(u - \ro_\dl) P_{d,e}(g(u))\big\}
\gt 
K\l(u(\mu_\dl r) - \ro_\dl\r) \! P_{d,e}
\!\l(g(u(\mu_\dl r)\r)
\gt \mu_\dl \, p_{d,e}\! \l(F(\mu_\dl r)\r)\!(1-r),
}
where $K = \frac{\mc A}{p_{d,e}(\la_0)}$. 
As in the case of \rf{comp1111}, the above inequality holds for 
$r\in (0,1)$. Evaluating the right-hand side at $r=\frac12$,
we obtain
\aa{
Q(\mu_\dl):=\mu_\dl \, p_{d,e} \Big(F\!\l(\frac{\mu_\dl}2\r)\Big) \lt 
2K \sup_{u\in [1-\dl,1]} (u - 1+\dl) P_{d,e}(g(u))
\lt 2K\, \dl \, P_{d,e}\l(g(1-\dl)\r).
}
Note that $p_{d,e}(\fdot)$ is strictly increasing. 
Next, since $F(r)>0$ for all $r\in (0,1)$ by \ref{H4}, $f(r)$ cannot be an identical 
zero in a small right neighborhood $(0,\eps)$ of  $0$.
Therefore, $F$ is strictly increasing on $(0,\eps)$. 
Therefore, $Q(\fdot)$ is strictly increasing on $(0,2\eps)$. 
Hence,
$\mu_\dl\to 0$ as $\dl\to 0$ uniformly in $u$, an integral solution to 
\rf{P}, and $\la \in [\la_0,\la^*]$. Fix $\sg\in (0,1)$ and 
choose $\dl>0$  such 
that $Q^{-1} \l(2K\, \dl \, P_{d,e}\l(g(1-\dl)\r)\r) < \sg$. 
Then, on $[\sg,1]$, integral solutions to \rf{P} are bounded by $1-\dl$
uniformly in $\la\in [\la_0,\la^*]$.
\end{proof}

\begin{proof}[Proof of Theorem \ref{th2222}]
By Lemma \ref{lem1111}, for any $N\in \Nnu$, the family $u_\la$, $\la \in [\la_0,\la^*)$
($\la_0>0$), is bounded on $[\frac1N, 1]$ by a constant $1-\dl$, where $\dl$ depends on
$N$. Therefore,  by the bounded convergence theorem,
for all $r\in [0,1)$ and $N\in \Nnu$,
\aa{
\int_r^1\!\ffi^{-1}\!\left(t, \la^* \int_{\frac1N}^t \frac{f(s)}{g(u_{\la^*}(s))} ds\right)\! dt
=\lim_{\la\to \la^*} \int_r^1\!\ffi^{-1}\left(t, \la \int_{\frac1N}^t \frac{f(s)}{g(u_\la(s))} ds\right)\! dt
\lt \lim_{\la\to \la^*} u_\la  \lt 1.
}
By the monotone convergence theorem, the sequence
\aa{
\l \{\ffi^{-1}\left(t, \la^* \int_{\frac1N}^t \frac{f(s)}{g(u_{\la^*}(s))} ds\right) \r \}_{N=1}^\infty
}
converges to an integrable function as $N\to \infty$. Therefore,
the limit function 
\aa{
\ffi^{-1}\left(t, \la^* \int_0^t \frac{f(s)}{g(u_{\la^*}(s))} ds\right)
}
is integrable on $[r,1)$, for any $r\in [0,1)$, and one can pass to the limit as $\la\to\la^*$ 
in equation \rf{eq1111} by the dominated convergence theorem.  Therefore,
$u_{\la^*}$ is an integral touchdown solution to problem \rf{P}.
\end{proof}
The proof of Theorem \ref{unique2222} on the uniqueness of a touchdown
solution, whenever it exists, is a direct consequence of the following proposition.
\begin{proposition}
\lb{unique1111}
Assume \ref{H1}--\ref{H3}. Let $u_{\la^\#}$, $\la^\#\in (0,\la^*]$, 
be an integral touchdown solution to  $(P_{\la^\#})$,
i.e., $u_{\la^\#}(0) = 1$. Then, $\la^\#$ is the pull-in voltage, i.e.,
 $\la^\#=\la^*$. Moreover, $u_{\la^\#}=u_{\la^*}$
 and the solution $u_{\la^\#}$ is maximal.
  
\end{proposition}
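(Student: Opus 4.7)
The plan is to establish, in sequence, the three assertions of Proposition \ref{unique1111}: $\la^\# = \la^*$, maximality of $u_{\la^\#}$, and the identification $u_{\la^\#} = u_{\la^*}$. The inequality $\la^\# \leq \la^*$ is immediate from the definition of the pull-in voltage as a supremum. For the reverse, I argue by contradiction: suppose $\la^\# < \la^*$. By Theorem \ref{prop1111}, the maximal solution $U_{\la^\#}$ exists; its maximality forces $U_{\la^\#} \geq u_{\la^\#}$, so $U_{\la^\#}(0) \geq 1$, while the constraint $U_{\la^\#} < 1$ on $(0,1)$ together with continuity at the origin gives $U_{\la^\#}(0) \leq 1$, whence $U_{\la^\#}(0) = 1$. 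Fixing any $\la_0 \in (\la^\#, \la^*)$, the monotonicity of the maximal branch (the proposition preceding Corollary \ref{cor4444}) yields $U_{\la_0} \geq U_{\la^\#}$; plugging into the integral representation and using $\la_0 > \la^\#$, the strict monotonicity of $\ffi^{-1}(t, \cdot)$, and $F(t) > 0$ for $t > 0$ from hypothesis \ref{H4}, I obtain the strict inequality $U_{\la_0}(0) > U_{\la^\#}(0) = 1$, contradicting $U_{\la_0} \leq 1$. Hence $\la^\# = \la^*$.

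With $\la^\# = \la^*$ in hand, the set $\La_{\la^*}$ is nonempty (it contains $u_{\la^\#}$), so the Zorn's lemma construction from the proof of Theorem \ref{prop1111} applies at $\la^*$ and produces a maximal element $U_{\la^*}$ with $U_{\la^*} \geq u_{\la^\#}$, hence $U_{\la^*}(0) = 1$. To identify $U_{\la^*}$ with $u_{\la^\#}$, I consider $D(r) = U_{\la^*}(r) - u_{\la^\#}(r) \geq 0$. Differentiating the integral equation and using $U_{\la^*} \geq u_{\la^\#}$ (so that $g(U_{\la^*}) \leq g(u_{\la^\#})$ and hence $\int_0^r f/g(U_{\la^*})\,ds \geq \int_0^r f/g(u_{\la^\#})\,ds$), I obtain $D'(r) \leq 0$. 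Combined with $D(0) = 0$ and $D \geq 0$, the non-increasing $D$ is forced to vanish identically; therefore $u_{\la^\#} = U_{\la^*}$ is the (unique) maximal integral solution at $\la^*$.

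Finally, to identify $u_{\la^\#}$ with the pointwise limit $u_{\la^*} = \lim_{\la \nearrow \la^*} U_\la$ from Corollary \ref{cor2222}: using $u_{\la^\#}$ as a supersolution for $(P_\la)$ with $\la < \la^*$, Proposition \ref{tss} produces integral solutions $w_\la \leq u_{\la^\#}$ at $\la$, and maximality of $U_\la$ yields $U_\la \geq w_\la$; monotone convergence of $\int_0^t f/g(U_\la)\,ds$, controlled by the majorant $f/g(u_{\la^\#})$, then lets me pass to the limit in the integral equation for $U_\la$ and conclude that $u_{\la^*}$ itself solves $(P_{\la^*})$, whence the maximality established above gives $u_{\la^*} \leq U_{\la^*} = u_{\la^\#}$. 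The reverse inequality follows from $u_{\la^*} = \lim U_\la \geq \lim w_\la$ once it is verified that $w_\la \to u_{\la^\#}$ pointwise as $\la \nearrow \la^*$. I expect this last verification to be the main obstacle: it requires continuity of the decreasing monotone iteration from Proposition \ref{tss} in the parameter $\la$, and the delicate point is the touchdown singularity at $r = 0$, where the dominated convergence argument is saved only by the integrability $\int_0^1 f/g(u_{\la^\#})\,ds < \infty$ built into the definition of an integral touchdown solution.
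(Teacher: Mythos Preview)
Your first two parts are essentially correct and in places cleaner than the paper: the direct contradiction $U_{\la_0}(0)>1$ is a nice shortcut, and your $D$-argument is the paper's ``maximal interval'' comparison in differential form.

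The genuine gap is in Part 3, and it is not the one you flag. You assert that $\int_0^t f/g(U_\la)\,ds$ is ``controlled by the majorant $f/g(u_{\la^\#})$'', but this requires $U_\la \lt u_{\la^\#}$ for $\la<\la^*$, which you never establish. None of $w_\la\lt u_{\la^\#}$, $U_\la\gt w_\la$, or the maximality of $u_{\la^\#}$ at $\la^*$ yields this: Zorn-maximality at $\la^*$ says nothing about comparability with solutions at strictly smaller $\la$, and the supersolution property of $u_{\la^\#}$ for $(P_\la)$ does not force \emph{every} solution of $(P_\la)$ to lie below it. So the dominated-convergence step fails before you even reach the $w_\la\to u_{\la^\#}$ issue you call the main obstacle.

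The paper sidesteps this precisely by the structural choice you undo in Part~1: it does \emph{not} prove $\la^\#=\la^*$ first. Working under $\la^\#<\la^*$, the maximal element $v_{\la^\#}$ is available inside the branch $(v_\la)_{\la<\la^*}$; maximality at $\la^\#$ gives $u_{\la^\#}\lt v_{\la^\#}$, and branch monotonicity then gives $u_{\la^\#}\lt v_\la$ for all $\la\in[\la^\#,\la^*)$. This forces $v_\la(0)=1$, so Theorem~\ref{th2222} applies directly and produces a touchdown limit $u_{\la^*}$ with $u_{\la^*}\gt u_{\la^\#}$---exactly the global ordering your majorant needed. The $D$-argument (with $D(0)=0$) then gives $u_{\la^\#}=u_{\la^*}$, and comparing the equations at $\la^\#$ and $\la^*$ forces $\la^\#=\la^*$. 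In short, by proving $\la^\#=\la^*$ first you discard the very inequality $u_{\la^\#}\lt v_\la$ that makes the identification with the limit go through.
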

\begin{proof}
Suppose $\la^\#<\la^*$.
By Theorem
\ref{prop1111}, there exists a (unique) maximal branch of integral solutions $v_\la$
to \rf{P} when $\la$ varies over $[\la^\#, \la^*]$.
By Corollary \ref{cor4444},
\aaa{
\lb{max2222}
u_{\la^\#} \lt  v_{\la^\#} \lt v_\la, \quad \text{for all}\;\;
\la \in [\la^\#, \la^*].
}
However, since $u_{\la^\#}(0) = 1$, then $v_\la(0) = 1$ for all $\la \in  [\la^\#, \la^*]$.
By Theorem \ref{th2222}, the pointwise limit $\lim_{\la\to \la^*} v_\la(r)=u_{\la^*}(r)$
exists and is an integral touchdown solution to $(P_{\la^*})$.  
Clearly, $u_{\la^*}(0) = u_{\la^\#}(0) = 1$ and $u_{\la^\#}(r) \lt u_{\la^*}(r)$ by
\rf{max2222}.
Let $[0,r_0]\sub [0,1]$ be the maximal interval where $u_{\la^\#}(r) =  u_{\la^*}(r)$.
If $r_0 < 1$, then
in a small right neighborhood of $r_0$, denote it by $(r_0,r_0+\eps)$, 
we have that $u_{\la^\#}(r) < u_{\la^*}(r)$. However, this contradicts to the 
following equation which has to be fulfilled for all $r\in (r_0,r_0+\eps)$:
\aa{
u_{\la^\#}(r) - u_{\la^*}(r) = 
- \!\int_{r_0}^r \l [ \ffi^{-1}\!\l(t, \la^\# \!\int_0^t \frac{f(s)}{g(u_{\la^\#}(s))} ds \r )
- \ffi^{-1}\!\l (t, \la^*\! \int_0^t \frac{f(s)}{g(u_{\la^*}(s))} ds \r ) \r] dt. 
}
Therefore, $r_0=1$, and hence, $u_{\la^\#} =  u_{\la^*}$. This implies that $\la^* = \la^\#$
since the above equation should be fulfilled for all $0\lt r_0<r\lt 1$.
Furthermore, $v_{\la^\#} = u_{\la^\#}$ by exactly the same argument. Hence, 
$u_{\la^\#}$ is maximal.
\end{proof}
\begin{proposition}
\lb{prop2222}
Assume \ref{H1}--\ref{H3} and let $u_\la$ be as in Theorem \ref{th2222}.
Further assume that $\lim_{\la\to\la^*} u_\la(0) < 1$. Then, the pointwise
 limit $u_{\la^*}(r) = \lim_{\la\to\la^*}u_\la(r)$ is an integral solution to \rf{P}
 which is not a touchdown solution. 
\end{proposition}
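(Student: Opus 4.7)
The plan is to pass to the limit $\la \to \la^*$ in the integral representation \rf{eq1111} via two applications of the dominated convergence theorem. The essential new ingredient, distinguishing this argument from the proof of Theorem \ref{th2222}, is the uniform bound
\aa{
u_\la(r) \lt C < 1 \quad \text{for all } \la\in (0,\la^*) \text{ and } r\in [0,1],
}
where $C := \lim_{\la\to\la^*} u_\la(0) < 1$. This bound is immediate from the two monotonicities already established: $u_\la(r)$ is non-decreasing in $\la$ along the maximal or minimal branch (by the proposition following Theorem \ref{prop1111} and by Corollary \ref{cor4444}), while $u_\la$ is non-increasing in $r$ because the equation in \rf{P} together with the strict monotonicity of $\ffi(r,\fdot)$ assumed in \ref{H2} forces $u_\la'(r)\lt 0$.

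By \ref{H1}, this translates into $g(u_\la(r)) \gt g(C) > 0$, so the family $\{f(\fdot)/g(u_\la(\fdot))\}$ admits the common integrable envelope $f/g(C)$ on $[0,1]$ (recall $F(1)<+\infty$ by \ref{H4}). Since $u_\la\to u_{\la^*}$ pointwise by Corollary \ref{cor2222}, a first application of dominated convergence gives
\aa{
\la \int_0^t \frac{f(s)}{g(u_\la(s))}\, ds \longrightarrow \la^* \int_0^t \frac{f(s)}{g(u_{\la^*}(s))}\, ds,
}
with both sides uniformly bounded by $\la^* F(1)/g(C)$. Continuity of $\ffi^{-1}(t,\fdot)$ then transfers this convergence to the integrand of the outer integral in \rf{eq1111}, and that integrand is dominated by $\ffi^{-1}(t, \la^* F(1)/g(C))$, which is integrable on $[0,1]$ by \ref{H2}. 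A second application of dominated convergence yields, for every $r\in [0,1]$,
\aa{
u_{\la^*}(r) = \int_r^1 \ffi^{-1}\l(t, \la^* \int_0^t \frac{f(s)}{g(u_{\la^*}(s))}\, ds\r) dt.
}

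The remaining verification that $u_{\la^*}\in \C^1(0,1)\cap \C[0,1]$ is standard: continuity follows from dominated convergence applied in the variable $r$ to the representation above, and differentiability on $(0,1)$ from a direct differentiation using the continuity of $\ffi^{-1}$. Evaluating the representation at $r=0$, together with monotone convergence, identifies $u_{\la^*}(0)$ with $\lim_{\la\to\la^*} u_\la(0)$, which by assumption is strictly less than $1$; hence $u_{\la^*}$ solves \rf{P} in the integral sense but is not a touchdown solution. I do not anticipate a real obstacle here: the only delicate point is establishing the uniform bound away from $1$, and this is immediate from branch monotonicity combined with the hypothesis $\lim_{\la\to\la^*}u_\la(0)<1$, which decouples this case from the singular behavior handled in Theorem \ref{th2222}.
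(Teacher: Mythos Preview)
Your approach coincides with the paper's---both exploit the uniform bound $u_\la(r)\lt u_{\la^*}(0)<1$ to keep $g(u_\la)$ bounded away from zero and then pass to the limit in \rf{eq1111}---but your treatment of the outer integral contains a gap. You assert that $\ffi^{-1}\!\big(t,\la^* F(1)/g(C)\big)$ is integrable on $[0,1]$ ``by \ref{H2}''; however, \ref{H2} only guarantees $\int_0^1\ffi^{-1}(s,F(s))\,ds<\infty$, and since $F(s)\to 0$ as $s\to 0_+$ this says nothing about $\ffi^{-1}(s,M)$ for a fixed constant $M>0$. In example \ref{E1} one has $\ffi^{-1}(t,M)=M^{1/(\beta+1)}\,t^{-\al/(\beta+1)}$, which is \emph{not} integrable near $t=0$ whenever $\al\gt\beta+1$---precisely the regime \ref{A3} treated in Section~\ref{s5}. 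So your proposed dominant fails in the paper's own main example.

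The repair is immediate: replace the second application of dominated convergence by monotone convergence. The outer integrand $\ffi^{-1}\!\big(t,\la\int_0^t f/g(u_\la)\big)$ is nonnegative and increases in $\la$ (both factors $\la$ and the inner integral increase along the branch, and $\ffi^{-1}(t,\fdot)$ is increasing), while the outer integral equals $u_\la(r)\lt 1$. Monotone convergence then yields the identity for $u_{\la^*}$ together with finiteness of the limiting integral, with no integrable dominant required. The remainder of your argument (regularity, $u_{\la^*}(0)=C<1$) is fine.
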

\begin{proof}
Since in this case $g(u_\la(r))$ is bouded by $g(u_{\la^*}(0))$, where 
$u_{\la^*}(0)<1$, we can pass to the limit  as $\la\to\la^*$ in equation \rf{eq1111}
by the bounded convergence theorem.  
\end{proof}
\begin{corollary}[Corollary of Theorem \ref{th2222} and Proposition \ref{prop2222}]
\lb{cor-pulin}
Assume \ref{H1}--\ref{H3}. Then,
there exists a positive number $\la^*>0$ such that
for all $\la\in (0,\la^*]$, there is an integral solution to \rf{P} and
for all $\la>\la^*$, there is no integral solution to \rf{P}.
\end{corollary}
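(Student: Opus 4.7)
The plan is to assemble the corollary from the pieces already in hand. I would take $\la^*$ to be the pull-in voltage produced by Theorem \ref{t-pulin}; by that theorem it is automatically a positive finite number, so the first part of the claim is free. Nonexistence above $\la^*$ is built into the definition as a supremum: if an integral solution existed for some $\mu>\la^*$, then $\mu$ would belong to the set whose supremum is $\la^*$, which is a contradiction. Existence on the open interval $(0,\la^*)$ is also already available via the maximal branch produced in Theorem \ref{prop1111} (which itself rests on the sub/supersolution method of Proposition \ref{tss}). So the only substantive point left is to exhibit an integral solution at the endpoint $\la=\la^*$.

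For the endpoint, I would fix the maximal branch $\{u_\la\}_{\la\in(0,\la^*)}$ supplied by Theorem \ref{prop1111}, and invoke Corollary \ref{cor2222} to obtain a pointwise limit $u_{\la^*}(r):=\lim_{\la\to\la^*}u_\la(r)$, which exists and is finite on $[0,1]$. I would then split into two cases according to the value $u_{\la^*}(0)\in [0,1]$. If $u_{\la^*}(0)=1$, Theorem \ref{th2222} applies verbatim and yields that $u_{\la^*}$ is an integral touchdown solution to $(P_{\la^*})$. If instead $u_{\la^*}(0)<1$, Proposition \ref{prop2222} delivers $u_{\la^*}$ as an integral (non-touchdown) solution to $(P_{\la^*})$. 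Either alternative gives what is needed, so $(0,\la^*]$ is closed under existence.

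I do not expect any real obstacle here, since the heavy lifting was done in Theorems \ref{t-pulin}, \ref{prop1111}, \ref{th2222} and Proposition \ref{prop2222}. The only step that deserves a line of care is the passage to the limit under the nonlinearity $f/g(u_\la)$ on the compact sub-intervals $[\sg,1]$ for $\sg>0$; Lemma \ref{lem1111} provides the uniform bound $u_\la(r)\lt 1-\dl$ there, which is exactly what is required to apply dominated (or bounded) convergence inside \rf{eq1111} in whichever of the two cases is active. Assembling these observations gives the claimed dichotomy between the regimes $\la\lt \la^*$ and $\la>\la^*$, with existence holding precisely on $(0,\la^*]$.
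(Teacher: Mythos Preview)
Your proposal is correct and matches the paper's intended approach: the paper states this result without explicit proof, labeling it as a corollary of Theorem~\ref{th2222} and Proposition~\ref{prop2222}, and your case split on whether $u_{\la^*}(0)=1$ or $u_{\la^*}(0)<1$ is precisely how those two results combine to cover the endpoint $\la=\la^*$. The remaining pieces (positivity and finiteness of $\la^*$, nonexistence above $\la^*$ from the supremum definition, existence on $(0,\la^*)$ via the maximal branch) are exactly as you describe.
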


\section{$\C^2$-regularity of integral solutions}
\lb{s4}

\begin{theorem}
Assume \ref{H1}--\ref{H3}. Furthermore, we assume that
 $\ffi\in \C^1([0,1]\x \Rnu)$. Let $u(r)$ be an integral solution to \rf{P}.
 Then, $u\in \C[0,1]\cap \C^2(0,1)$. In particular,  the touchdown solution 
 to \rf{P} is $\C[0,1]\cap \C^2(0,1)$-regular.
\end{theorem}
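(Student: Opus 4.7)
The plan is to use the integral identity $\varphi(r,-u'(r)) = H(r)$ with $H(r) := \lambda \int_0^r f(s)/g(u(s))\,ds$, which holds by the definition of an integral solution, to bootstrap $u$ from $\C^1(0,1)$ (given for free by Definition \ref{int-sol1111}) to $\C^2(0,1)$. Since $u\in\C^1(0,1)\cap\C[0,1]$ with $0<u(r)<1$ on $(0,1)$, the composition $g(u(\fdot))$ is continuous and strictly positive on each compact subinterval of $(0,1)$; combined with $f\in L^1(0,1)$ this shows $H$ is locally absolutely continuous on $(0,1)$. By \ref{H4}, $H(r)>0$ for $r\in(0,1]$, so strict monotonicity of $\varphi(r,\fdot)$ together with $\varphi(r,0)=0$ yields $-u'(r)=\varphi^{-1}(r,H(r))>0$ for $r\in(0,1)$.

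I would then fix $r_0\in(0,1)$, set $v_0:=-u'(r_0)>0$, and apply the implicit function theorem to $F(r,v):=\varphi(r,v)-H(r)$ at $(r_0,v_0)$. The map $F$ is $\C^1$ thanks to $\varphi\in\C^1([0,1]\x\Rnu)$ and the regularity of $H$ established above; the non-degeneracy $\partial_v F(r_0,v_0)=\partial_v\varphi(r_0,v_0)>0$ has to be verified (see below). Once it is, IFT produces a local $\C^1$ solution $r\mapsto v(r)$ of $\varphi(r,v)=H(r)$, which must coincide with $-u'(\fdot)$ by uniqueness in $v$ from strict monotonicity. Differentiating the identity in $r$ then yields the explicit formula
\[
u''(r) = \frac{\partial_r\varphi(r,-u'(r)) - \lambda f(r)/g(u(r))}{\partial_v\varphi(r,-u'(r))},
\]
continuous on $(0,1)$ once every factor is.

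The main obstacle is establishing $\partial_v\varphi(r,-u'(r))>0$ for every $r\in(0,1)$: strict monotonicity and $\C^1$-regularity of $\varphi(r,\fdot)$ force only $\partial_v\varphi\geq 0$, with possible isolated zeros in $v$. I would rule such zeros out along the curve $\{(r,-u'(r))\}$ by a contradiction argument based on a higher-order Taylor inversion of $\varphi(r_0,\fdot)$ near such a zero, producing a local behaviour of $\varphi^{-1}$ inconsistent with the absolutely continuous behaviour of $H$; as a more model-specific alternative, one may invoke the concrete form of $\varphi$ in \ref{E1}--\ref{E4}, where $\partial_v\varphi$ vanishes only on $\{v=0\}\cup\{r=0\}$, neither of which touches the curve. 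A secondary but related point is that continuity of the numerator requires continuity of $f$ at each $r\in(0,1)$, mildly strengthening \ref{H4} but natural in the models of interest.
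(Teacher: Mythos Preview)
Your approach is essentially identical to the paper's: both invert the relation $\varphi(r,-u'(r))=H(r)$ via the implicit/inverse function theorem to conclude $-u'$ is $\C^1$. The paper writes $-u'(r)=\varphi^{-1}(r,H(r))$, invokes the inverse function theorem for differentiability of $\varphi^{-1}$ in the second slot, and the implicit function theorem (applied to $\Psi(r,y)=\varphi(r,y)-v$) for differentiability in $r$, arriving at $\partial_r\varphi^{-1}(r,v)=-\partial_r\varphi/\partial_y\varphi$ evaluated at $y=\varphi^{-1}(r,v)$.

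You are, however, more careful than the paper, and the two obstacles you flag are genuine and \emph{not} addressed there either. First, the non-degeneracy $\partial_v\varphi(r,-u'(r))>0$: the paper simply applies the inverse and implicit function theorems without checking that $\partial_v\varphi\neq 0$ along the solution curve; as you note, strict monotonicity plus $\C^1$ gives only $\partial_v\varphi\geq 0$, so the paper's proof has the same gap. Your suggested remedies (a local Taylor-inversion contradiction, or falling back on the explicit models \ref{E1}--\ref{E4} where $\partial_v\varphi$ vanishes only at $v=0$ or $r=0$) are reasonable, though the first would need to be made precise. Second, continuity of $u''$ indeed requires continuity of $r\mapsto f(r)/g(u(r))$ on $(0,1)$, hence continuity of $f$; under \ref{H4} alone $f$ is merely measurable and $H$ only absolutely continuous, so the conclusion $u\in\C^2(0,1)$ is not actually justified by the paper's argument either. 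In short: same method, but you have correctly isolated the places where both your argument and the paper's need an extra hypothesis (or a sharper argument) to go through.
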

\begin{proof}
The equation in \rf{P} is equivalent to
\aa{
-u'(r) = \ffi^{-1}\l(r, \la\int_0^r\frac{f(s)}{g(u(s))}\r).
}
By the inverse function theorem, the map $\ffi^{-1}(r,v)$ is differentiable
in the second argument.  Thus, in order to show that $u\in \C^2(0,1)$,
we have to show that $\ffi^{-1}(r,v)$ is differentiable in $r$.
For each  fixed $v\in\Rnu$, the equation $\Psi(r,y): = \ffi(r,y) - v = 0$ defines 
$y$ as an implicit function of $r$. By the theorem on the differentiability
of an implicit function, we have that $\frac{dy}{dr} = -\frac{\pl_r \Psi(r,y)}{\pl_y \Psi(r,y)}$.
However, the implicit function $y(r) = \ffi^{-1}(r,v)$ is defined uniquely. Consequently,
$\ffi^{-1}(r,v)$ is differentiable in $r$ and 
$\pl_r \ffi^{-1}(r,v) =-\frac{\pl_r \ffi(r,y)}{\pl_y \ffi(r,y)}\big|_{y=\ffi^{-1}(r,v)}$.
\end{proof}

\section{Touchdown solution to \rf{G}: existence and asymptotics}
\lb{s5}
This section is dedicated to model \ref{E1}. Here we prove Theorem \ref{ss2222}.
\subsection{Equivalent problem in a neighborhood of infinity}
As we mentioned before, the equation in \rf{P} 
can be written in the form \rf{G} which, in turn, implies the following equation
 obtained by differentiation:
\aaa{
\lb{G1}
L(\al,\beta,\gm)[u] = \la \frac{h(r)}{g(u(r))}.
}
Note that its solution $u$ is an integral solution if and only if
\aaa{
\lb{integral1111}
\lim_{r\to 0} r^\al |u'(r)|^\beta u'(r) =  0.
}
It is straighforward to verify that for $r\in (0,1)$, 
equation \rf{G1} can be rewritten as 
 \aa{
u''(r)|u'(r)|^\beta + \al \,\frac{u'(r)|u'(r)|^\beta}{r} + \frac{\la\, r^{\gm-\al}\, 
h(r)}{g(u(r))} = 0.
}
By the change of variable $r = c\,e^{-t}$, where $c=\la^{-\frac1{\te}}$,
 we transform \rf{G1} to the problem
\aaa{
\lb{w-dif1111}
(\beta+1)|v'|^\beta v'' - (\al -\beta-1) |v'|^{\beta}v' + \frac{e^{-\te t} h(t)}{g(v)} = 0, 
\quad t\in (\ln c,+\infty),
}
where $\te = \gm +2+ \beta - \al$ and, with a slight abuse of notation, we re-used the symbol
$h(t)$ for the function $h(c\,e^{-t})$.
The boundary condition $u(1) = 0$
becomes  $v(\ln c) = 0$. Note that \rf{w-dif1111}
 is equivalent to \rf{G1}, but given in a neighborhood of infinity.

Since we are interested in integral solutions, condition \rf{integral1111} implies
that
\aaa{
\lb{w-int1111}
|v'(t)|^{\beta}v'(t) =  \int_t^{+\infty} \!\!e^{(\al-\beta-1)(t-s)}\, \frac{e^{-\te s} h(s)}{g(v(s))} \,ds
}
and it is equivalent to the convergence of the integral on 
 the right-hand side of \rf{w-int1111}.

While we call a solution $u(r)$ to \rf{G} with $u(0) = 1$ a touchdown solution, 
the respective solution $v(t) = u(c\, e^{-t})$ to \rf{w-int1111}, such that $\lim_{t\to+\infty} v(t) = 1$,  
will be referred to below as a {\it singular solution}.  
\subsection{Construction of a singular solution to  equation \rf{w-int1111}}
The following below Schauder's fixed point theorem
can be found in \cite{smart}, Section 4.
\begin{theorem}(Schauder's fixed point theorem)
\lb{schauder}
 Let $X$ be a normed space and let $\mathscr T$ be a compact continuous map
$X \to X$. Then, $\mathscr T$ has a fixed point.
\end{theorem}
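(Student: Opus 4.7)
The plan is to reduce Schauder's theorem to Brouwer's fixed point theorem in finite dimensions via approximation of $\mathscr T$ by continuous maps with finite-dimensional range (Schauder projections). The one structural input beyond continuity is compactness, so the first move is to note that $K:=\overline{\mathscr T(X)}$ is a compact subset of $X$; this compact set is the bridge that transfers finite-dimensional fixed-point data back into the original space.

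For each integer $n\geq 1$, total boundedness of $K$ furnishes finitely many centers $y_1^{(n)},\dots,y_{m_n}^{(n)}\in K$ with $K\subset\bigcup_{i}B(y_i^{(n)},1/n)$. I would then define the associated Schauder projection
$$P_n(z)=\frac{\sum_{i=1}^{m_n}\mu_i(z)\,y_i^{(n)}}{\sum_{i=1}^{m_n}\mu_i(z)},\qquad \mu_i(z)=\max\{0,\,1/n-\|z-y_i^{(n)}\|\},$$
and record two properties that fall straight out of the construction: the denominator is strictly positive on $K$, $P_n(z)$ lies in the finite-dimensional compact convex set $C_n:=\mathrm{co}\{y_1^{(n)},\dots,y_{m_n}^{(n)}\}$, and $\|P_n(z)-z\|<1/n$ for every $z\in K$ (only indices with $\|z-y_i^{(n)}\|<1/n$ contribute to the weighted average).

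Next I would apply Brouwer's fixed point theorem to the continuous self-map $\Phi_n:=P_n\circ\mathscr T$ of $C_n$. This map is well-defined because $\mathscr T(C_n)\subset K$ and $P_n(K)\subset C_n$, and $C_n$ is homeomorphic to a closed ball in a finite-dimensional subspace of $X$, so Brouwer delivers $x_n\in C_n$ with $x_n=P_n(\mathscr T(x_n))$. Since $\mathscr T(x_n)\in K$ which is compact, a subsequence (still denoted $x_n$) satisfies $\mathscr T(x_n)\to y^*\in K$. The identity $\|x_n-\mathscr T(x_n)\|=\|P_n(\mathscr T(x_n))-\mathscr T(x_n)\|<1/n$ then forces $x_n\to y^*$ as well, and continuity of $\mathscr T$ yields $\mathscr T(y^*)=y^*$.

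The only genuine obstacle is Brouwer's fixed point theorem itself, which must be invoked as a topological black box; its proofs use homology, degree theory, or simplicial approximation and are not reducible to the kind of elementary metric estimates that drive the rest of the argument. A minor point to verify is that $X$ is convex so that $C_n\subset X$ and the composition $P_n\circ\mathscr T$ makes sense inside $X$, which for a normed vector space (as in the statement) is automatic.
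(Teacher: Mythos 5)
The paper does not prove this theorem; it is stated as a tool and attributed to Smart's book (reference \cite{smart}, Section 4), so there is no in-paper proof to compare against. Your argument is the standard reduction of Schauder's theorem to Brouwer's via Schauder projections, and it is correct.

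Two small remarks. First, the phrase ``$C_n$ is homeomorphic to a closed ball in a finite-dimensional subspace'' is slightly loose: $C_n=\mathrm{co}\{y_1^{(n)},\dots,y_{m_n}^{(n)}\}$ may be degenerate (a point, a segment, a lower-dimensional polytope), so what you should invoke is the corollary of Brouwer for an arbitrary non-empty compact convex subset of a finite-dimensional space, obtained by passing to the affine hull where the set has non-empty relative interior. Second, a feature worth advertising: because you work with the convex hulls of \emph{finite} nets rather than with the closed convex hull $\overline{\mathrm{co}}\,\overline{\mathscr T(X)}$, you never need completeness of $X$ (in a non-complete normed space the closed convex hull of a compact set can fail to be compact). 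This makes your argument valid for a general normed space, which is exactly the hypothesis in the statement as the paper gives it, and it matches how the theorem is actually applied in Section 5: the map $\hat\Gm$ is shown there to have precompact image $\{\hat\Gm(x):x\in X_T\}$, i.e.\ it is compact in the sense your proof uses.
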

\begin{theorem} 
\lb{t1111}
Assume \ref{A1}--\ref{A4}. 
Then, there exists $T>0$ such that on $[T,+\infty)$, equation \rf{w-int1111}
possesses the singular solution 
\aaa{
\lb{v1111}
v^*(t) = g^{-1}(\kp\, e^{-\te \cp t} + x(t)), \qquad 
x(t) = o(e^{-\te \cp t}),
 }
 where the constants $\kp>0$ and $\cp\in (0,1)$ are given by \rf{co1111}.
\end{theorem}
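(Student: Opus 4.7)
The plan is to recast \eqref{w-int1111} as a fixed-point equation for the ``correction'' $x(t)$ in the ansatz $w(t) := g(v(t)) = \kp\, e^{-\te\cp t} + x(t)$, and then invoke Theorem \ref{schauder} on an appropriately weighted function space. Rewriting \eqref{w-int1111} in terms of $w$ via $v'(t) = w'(t)/g'(g^{-1}(w(t)))$ and using Remark \ref{a2222} (so that $g'(g^{-1}(w)) = -A w^{q}(1+o(1))$ as $w\to 0_+$) together with \ref{A4} to replace $h(s)$ by $C(1+o(1))$, one obtains a relation of the form
\[
(-w'(t))^{\beta+1} = A^{\beta+1} w(t)^{q(\beta+1)} \int_t^{+\infty}\!\! e^{(\al-\beta-1)(t-s)} \frac{C\, e^{-\te s}}{w(s)}\, ds \cdot (1+\eps(t,w)),
\]
where $\eps(t,w)\to 0$ as $t\to +\infty$. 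Substituting the leading ansatz $w(t)\sim \kp\, e^{-\te\cp t}$ into both sides and matching exponents yields the constraint $\te\cp(1-q)(\beta+1) = \te(1-\cp)$, i.e., $\cp = ((1-q)(\beta+1)+1)^{-1}$; matching coefficients then forces $\kp^{1/\cp} = (A/(\te\cp))^{\beta+1} C/(\gm+1-\te\cp)$, which is precisely \eqref{co1111}. The positivity $\gm + 1 - \te\cp > 0$ needed for convergence of the $s$-integral follows from $\beta > -1$, $\al > \beta+1$, $\gm \gt \al$ via elementary manipulation of $\te = \gm+2+\beta-\al$.

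Having fixed $\cp$ and $\kp$, the next step is to integrate the rearranged equation from $t$ to $+\infty$ (using $w(+\infty)=0$) and subtract off the leading term $\kp e^{-\te\cp t}$ to produce an integral equation of the form $x = \mathscr T[x]$. I would define the normed space
\[
X_T = \{ x \in \C[T,+\infty) : |x(t)| \lt \psi(t) \text{ for all } t \gt T \},
\]
equipped with the weighted norm $\|x\|_T = \sup_{t\gt T} |x(t)|/\psi(t)$, where $\psi(t)$ is a positive decreasing weight satisfying $\psi(t) = o(e^{-\te\cp t})$ and chosen to majorize the error rates induced by the $o(1)$ terms in \ref{A2} and \ref{A4}. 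Roughly, $\psi$ should decay strictly faster than the dominant term $e^{-\te\cp t}$ but strictly slower than the product of $e^{-\te\cp t}$ with the leading error $\eps(t,w)$, so that the residual after exact cancellation still fits under $\psi$.

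Continuity of $\mathscr T$ on $X_T$ is a standard dominated-convergence argument using the continuity of $g^{-1}$ and $g'$. Compactness follows by Arzel\`a--Ascoli on each bounded subinterval of $[T,+\infty)$ combined with the uniform $\psi$-decay at infinity built into $X_T$; the latter prevents mass from escaping, and together with the equicontinuity produced by the explicit integrand it delivers precompactness in the weighted sup topology. Theorem \ref{schauder} then produces a fixed point $x^*\in X_T$; setting $v^*(t) = g^{-1}(\kp e^{-\te\cp t} + x^*(t))$ yields the required singular solution on $[T,+\infty)$, with the asymptotics $x^*(t) = o(e^{-\te\cp t})$ built into $X_T$.

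The principal obstacle will be the self-mapping verification $\mathscr T(X_T)\sub X_T$ for $T$ large. This requires that, after the exact cancellation of $\kp e^{-\te\cp t}$ (which is guaranteed precisely by the choice \eqref{co1111}), the residual errors from the $o(1)$ corrections in \ref{A2} and \ref{A4}, together with the contribution coming from the perturbation of $1/w(s)$ around its leading value $\kp^{-1} e^{\te\cp s}$, can all be bounded above by $\psi(t)$. A secondary technical subtlety is the correct extraction of the $(\beta+1)$-th root, which is legitimate because $w$ is monotonically decreasing so that $-w' > 0$, but requires careful linearization in $x$ to put $\mathscr T$ in a form compatible with the weighted norm. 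Choosing $\psi$ that strikes the right balance between these competing error sources is the key calibration underlying the whole argument.
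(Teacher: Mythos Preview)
Your overall strategy---the ansatz $g(v) = \kp e^{-\te\cp t} + x$, the matching of exponents and coefficients to pin down $\cp$ and $\kp$, and the Schauder/Arzel\`a--Ascoli package---coincides with the paper's. The difference lies entirely in the functional-analytic setup, and there the proposal has a real gap.

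You want to work in the closed ball $\{|x|\lt \psi\}$ for some concrete weight $\psi(t)=o(e^{-\te\cp t})$ and obtain the self-map property $\mathscr T(\{|x|\lt\psi\})\subset\{|x|\lt\psi\}$ for large $T$. The difficulty is that after the exact cancellation forced by \eqref{co1111}, the residual has two qualitatively different pieces: (i) errors coming from the $o(1)$ in \ref{A2} and \ref{A4}, of size $o(e^{-\te\cp t})$ with no quantitative rate; and (ii) the feedback from $x$ itself, entering both through the expansion of $1/w$ and through $g'(g^{-1}(w))$. A short computation shows that (ii) contributes a term of order $\psi(t)$ with a fixed multiplicative constant (roughly $q+\frac1{\beta+1}$ times a convolution factor) that does not go to zero as $T\to\infty$ and is not obviously below $1$. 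Consequently there is no evident choice of $\psi$ that simultaneously absorbs (i) and makes (ii) a strict contraction of the $\psi$-ball. Your own remark that this is ``the principal obstacle'' is accurate, but the device you propose---calibrating $\psi$ between $e^{-\te\cp t}$ and $e^{-\te\cp t}\eps(t)$---handles (i) only and leaves (ii) untouched. (A minor side issue: your $X_T$ as written is a convex set, not a normed space, so you would need the convex-set form of Schauder rather than Theorem~\ref{schauder}.)

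The paper circumvents this entirely. It takes $X_T=\{x\in\C[T,\infty):x(t)=o(e^{-\te\cp t})\}$ with the \emph{unweighted} sup norm, defines the map $\Gm$ only on the subset $\Xi_T=\{|\X|\lt\frac14\}$ where it makes sense, and then extends it to all of $X_T$ by composing with the radial cutoff $x\mapsto x_\f$. The crucial observation is that for $x\in\Xi_T$ one has $\Gm(x)(t)=o(e^{-\te\cp t})$---not any specific rate, just membership in $X_T$---because both (i) and (ii) above are $o(e^{-\te\cp t})$ once $\X(t)\to 0$. Schauder then produces a fixed point $x^0\in X_T$ of the extended map; since $x^0=o(e^{-\te\cp t})$ automatically, one can enlarge $T$ so that $|\X^0|\lt\frac14$ on $[T,\infty)$, whence $x^0_\f=x^0$ and $x^0$ is a genuine fixed point of $\Gm$. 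The point is that the paper never needs an invariant ball of a prescribed radius; it only needs the image to land in the linear space $X_T$, and the truncation device makes that sufficient.
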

\begin{proof}
The idea of the proof is to substitute $v^*$, defined by \rf{v1111}, into 
equation \rf{w-dif1111} and to set up a fixed point argument, with respect to $x$, 
in a certain normed space by means of Theorem \ref{schauder}.

{\it Step 1. The set of equations defining the fixed point map.} 
Thus, we search for a singular solution in the form \rf{v1111}, where
the choice of $\cp \in (0,1)$ and $\kp>0$ will be explained later.
The last term in \rf{w-dif1111} transforms via the substitution 
 \rf{v1111}  as follows:
\aaa{
\lb{sig2222}
& \frac{e^{-\te t} h(t)}{\kp \, e^{-\te \cp t} + x(t)}  =
\kp^{-1}e^{-\te(1-\cp) t} h(t)  \Big(1- \frac{\X(t)}{1+\X(t)}\Big),\\
&\text{where} \;\; \X(t) = x(t) e^{\te \cp t} \kp^{-1}.\lb{x1111}
}
Note that if $T>0$ is large enough, then $|\X(t)|<\frac14$ for all 
$t\gt T$. 
Hence, by \ref{A4}, if the solution in the form \rf{v1111} exists for a large enough $T>0$, 
it is an integral solution.
In particular, ${v^*}'(t)>0$, so $v^*(t)$ increases in $t$.

For the fixed point argument, we will use the following set of equations.
First, by using \rf{w-int1111} and \rf{sig2222}, we represent ${v^*}'(t)^{\beta+1}$ as 
\aaa{
\lb{psiy1111}
{v^*}'(t)^{\beta+1}  = \psi(t) + y(t),
}
where 
\eqn{
\lb{psi1111}
&\psi(t) = \kp^{-1} \int_t^{+\infty} \!\!e^{(\al-\beta-1)(t-s)}\, e^{-\te(1-\cp) s} h(s) ds,\\
&y(t) =  - \kp^{-1} \int_t^{+\infty} \!\!e^{(\al-\beta-1)(t-s)}\, e^{-\te(1-\cp) s} h(s) 
\frac{\X(s)}{1+\X(s)} ds.
}
By boundedness of $h$, the integrals \rf{psi1111} converge for sufficiently large $T>0$.
Derivating \rf{v1111} with respect to $t$, gives
\aaa{
\lb{psi+y1111}
\l[ \frac{- \te\cp \kp\, e^{-\te \cp t} + x'}{g'\!\l(g^{-1}(\kp \, e^{-\te \cp t} + x)\r)}\r]^{\beta+1}
= \psi(t) + y(t).
}

{\it Step 2. The fixed point map.}
Introduce the normed space
\eqn{
&X_T = \l\{x\in \C[T,+\infty) \;\; \text{such that} \;\; x(t)  = 
o(e^{-\te \cp t})\;\; 
\text{as} \;\; t\to+\infty    \r\}\\
&\text{with the norm} \quad \|x\|_\T: =  \sup_{t\gt T} |x(t)|.\notag
}
Furthermore, introduce the subset of $X_T$:
\aa{
&\Xi_T  = 
\Big\{x\in X_T: 
\|x\|_t \lt \frac{\kp}4 \, e^{-\te \cp t}\; \; \forall \, t\gt T\Big\} =
 \Big\{x\in X_T: \|\X\|_T \lt \frac14\Big\}, \\
&\|x\|_t = \sup_{s\gt t} |x(s)|,
}
where $\X$ is defined via $x\in X_T$ by \rf{x1111}.
Note that for all $x\in \Xi_T$, 
 the argument of $g^{-1}$ in \rf{v1111} is positive, and hence,
$v^*$ is well-defined. 

First, we will define the map $\Gm: \Xi_T  \to X_T$ while fixing
the parameters $\cp$ and $\kp$ in such a way that
$\Gm(x)\in X_T$ for $x\in \Xi_T $.
We then extend the map $\Gm$ from $\Xi_T $ to $X_T$, obtaining 
by this a map $\hat\Gm: X_T\to X_T$.
We will then apply Schauder's fixed point theorem 
to show that the map $\hat \Gm: X_T\to X_T$ has a fixed point $x^0$. By choosing 
$T>0$ sifficiently large, we achieve that $x^0$ is a fixed point of $\Gm$.

More specifically, we define $\Gm(x)$ by
expressing $x'$ from the left-hand side of \rf{psi+y1111}, 
and integrating the resulting equation from $t$ to $+\infty$. Namely, given $x\in \Xi_T $,
\aaa{
\lb{gm1111}
\Gm(x)(t) = - \int_t^{+\infty}\l((\psi(s)+y(s))^{\frac1{\beta+1}}
g'\l(g^{-1}(\kp \, e^{-\te \cp s} + x(s))\r) + \te\cp \kp\, e^{-\te \cp s}\r) ds,
}
where $y$ is defined, also via $x\in \Xi_T $, by the second equation 
in \rf{psi1111}. The function $\psi$, explicitly given by the first equation in
\rf{psi1111}, can be regarded as known. Moreover, since $h(t) = C(1+o(1))$ 
as $t\to+\infty$, the first expression in \rf{psi1111} implies that
\aaa{
\lb{psi2222}
\psi(t) = \frac{C}{\kp (\gm+1 - \te\cp)} \,e^{-\te(1-\cp)t}(1+o(1)),
} 
where we have taken into account that $\te(1-\cp) + (\al-\beta-1) = \gm+1-\te\cp$.
Since for $x\in \Xi_T $, $|\X(t)|\lt \frac14$ for all $t\gt T$, it holds that
\aaa{
\lb{y1111}
|y(t)| \lt \frac43\,\|\X\|_t \,\psi(t) < \frac{\psi(t)}{3},
}
Therefore, $(\psi(t)+y(t))^{\frac1{\beta+1}}$ in \rf{gm1111} is well-defined and
\aa{
(\psi(t)+y(t))^{\frac1{\beta+1}} 
= \psi(t)^{\frac1{\beta+1}}\big(1+o(1)\big).
}
Next, by Remark \ref{a2222}, which is equivalent to  \ref{A2},
we have
\aaa{
\lb{2nd1111}
&g'\l(g^{-1}(\kp \, e^{-\te \cp s} + x(s))\r) = -A(\kp e^{-\te \cp  s}+x)^q + o(e^{-\te \cp  q s})\\
&= -A\,\kp^q e^{-\te \cp q s}(1+o(1))^q + o(e^{-\te \cp q s})
= -A\,\kp^q e^{-\te \cp q s} (1+o(1)).\notag
}
By the above formula, \rf{gm1111} and \rf{psi2222}, we obtain
\mmm{
\lb{gm2222}
\Gm(x)(t) = \int_t^{+\infty}  \Big( A \kp^q \l[\frac{C}{\kp (\gm+1 - \te\cp)}\r]^{\frac1{1+\beta}}
\exp\l\{-\te \cp q s - \frac{\te(1-\cp)s}{\beta+1}\r\} (1+o(1))\\
- \te\cp \kp\, \exp\{-\te \cp s\} \Big) ds.
}
Thus, by setting  
\aaa{
\lb{eqs1111}
\te \cp q + \frac{\te(1-\cp)}{\beta+1} = \te \cp \quad \text{and} \quad
A \kp^q \l[\frac{C}{\kp (\gm+1 - \te\cp)}\r]^{\frac1{1+\beta}} = \te\cp \kp,
}
we obtain that $\Gm(x)(t) = o(e^{-\te \cp t})$. Therefore, $\Gm(x)\in X_T$
if $x\in\Xi_T $.
Furthermore, the first equality in \rf{eqs1111} implies the expression
for $\cp$ in \rf{co1111}, while the second equality in \rf{eqs1111}
 implies the expression for $\kp$.
 
 Thus, we have constructed the map $\Gm:\Xi_T \to X_T$.  
 Let for 
 $x\in X_T$, $\X_\f(t) = \min\{\X(t),\frac14\}$ if
 $\X(t)\gt 0$ and $\X_\f(t) = \max\{\X(t),-\frac14\}$ if
 $\X(t)< 0$,
 where (we recall)
 $\X$ is defined via $x\in X_T$ by \rf{x1111}. Further,
 if $x\in X_T$, we define $x_\f(t) = \kp\, e^{-\te \cp t} \X_\f(t)$.
 Clearly, $x_\f\in \Xi_T$  if $x\in X_T$.  Thus, we define 
 \aa{
\hat \Gm: X_T\to X_T, \quad \hat \Gm(x) = \Gm(x_\f).
 }

{\it Step 3. Continuity of $\hat \Gm$.}
Take $x^1,x^2\in\Xi_T $. By \rf{gm1111} and \rf{y1111},
\mm{
|\Gm(x^1)(t) - \Gm(x^2)(t)|
\lt \int_t^{+\infty} \Big(
 |g'(g^{-1}(\kp e^{-\cp \te s} + x^1))|\, \big|(\psi+y^1)^\frac1{\beta+1}
- (\psi+y^2)^\frac1{\beta+1}\big|\\
+ \Big(\frac43\Big)^\frac1{\beta+1} \psi(s)^\frac1{\beta+1} 
\big|g'(g^{-1}(\kp e^{-\cp \te s} + x^1)) - g'(g^{-1}(\kp e^{-\cp \te s} + x^2))\big|
\Big) ds,
}
where $y^1$ and $y^2$ are
functions associated with $x^1$ and $x^2$, respectively,
by formula \rf{psi1111}. By Remark \ref{a2222},
which is equivalent to \ref{A2}, $|g'(g^{-1}(\kp e^{-\cp \te t} + x^1(t)))| \lt  K_1 e^{-q \cp \te t} $,  
where $K_1>0$ is a constant.
Further, by \rf{psi1111},
\aa{
|y^1(t)- y^2(t)| &\lt 
\kp^{-1}\!\! \int_t^{+\infty} \!\!\!e^{(\al-\beta-1)(t-s) -\te(1-\cp) s} 
\frac{ h(s) |\X^1 - \X^2|}{(1+\X^1)(1+\X^2)}ds\\	
&\lt \frac{16}9\, \kp^{-1} \psi(t)\, e^{\te \cp t} \|x^1-x^2\|_t,	
}
and consequently, there exists a constant $K_2>0$ such that
\aa{
\big|(\psi+y^1)^\frac1{\beta+1} - (\psi+y^2)^\frac1{\beta+1}\big|
\lt 
K_2\, \psi(t)^\frac1{\beta+1} e^{\te \cp t} \|x^1-x^2\|_t.
}
Furthermore, 
\aaa{
\lb{rhsg}
\big|g'(g^{-1}(\kp e^{-\cp \te t} \!+ x^1)) - g'(g^{-1}(\kp e^{-\cp \te t} \!+ x^2))\big|\!
\lt\! \sup_{x\in\Xi_T } \l|\frac{g''(g^{-1}(\kp e^{-\cp \te t} \!+x))}{g'(g^{-1}(\kp e^{-\cp \te t} \!+x))}\r|
|x^1(t)-x^2(t)|.
}
The supremum on the right-hand side, can be estimated by \ref{A2}
and L'Hopital's rule:
\aa{
-A = \lim_{u\to 1_-}\frac{g'(u)}{g(u)^q} = \lim_{u\to 1_-} \frac{g''(u)}{q g(u)^{q-1} g'(u)}.
}
From here, we obtain that as $u\to 1_-$,
\aa{
\frac{g''(u)}{g'(u)} = -A\,q\, g(u)^{q-1}(1+o(1)).
}
Therefore, the left-hand side of \rf{rhsg} can be estimated from above by the term
\aa{
K_3\, e^{\cp\te(1-q)t}\|x^1- x^2\|_t, 
}
where $K_3>0$ is a constant. The above estimates,
the first equality in \rf{eqs1111}, and expression \rf{psi2222} imply that
 there exists a constant $K_4>0$ such that for all $t\gt T$,
\aa{
\|\Gm(x^1) - \Gm(x^2)\|_t
\lt K_4 \int_t^{+\infty}  \|x^1-x^2\|_s \, ds.
}
From here we obtain that for $x^1,x^2\in X_T$,
\aa{
\|\hat \Gm(x^1) - \hat \Gm(x^2)\|_T
\lt K_4 \int_T^{+\infty}  \|x^1_\f-x^2_\f\|_s \, ds.
}
Suppose $x^n,x\in X_T$ and $\|x^n - x\|_T\to 0$. 
Then, for all $s\gt T$, $\|x^n_\f - x_\f\|_s \lt  \|x^n - x\|_s
\lt \|x^n - x\|_T \to 0$.
 On the other hand, $\|x^n_\f\|_s$ and $\|x_\f\|_s$ are bounded
 by $\frac{\kp}4 e^{-\te\cp s}$ on $[T,+\infty)$. This implies that
 $\|\hat \Gm(x^n) - \hat \Gm(x)\|_T\to 0$ by the dominated convergence
 theorem. Hence, the map $\hat \Gm: X_T\to X_T$ is continuous. 

{\it Step 4. Compactness of $\hat \Gm$.}
Let us show that $\hat  \Gm:  X_T  \to X_T$ is a compact map. 
Note that if $x\in\Xi_T$, by \rf{y1111}, $y(t) = \psi(t)O(\|\X\|_t)$.
A more careful consideration than \rf{gm2222} and the definition
\rf{gm1111} of the map $\Gm$ show that 
\aa{
\Gm(x)(t) = \te\cp \kp \int_t^{+\infty} e^{-\te \cp s}
\big( O(\|\X\|_t)(1+\X(s))^q -1 + o(1)\big) ds.
}
Note that  $o(1)$
in the latter formula expresses a function which tends
to zero uniformly in $x\in\Xi_T $. 
Indeed, it comes from the second expression in the first line 
in \rf{2nd1111} which is uniform in $x\in\Xi_T$.
Since $\hat \Gm(x) = \Gm(x_\f)$, 
there exists a constant $K>0$ such that
\aa{
|\hat \Gm(x)(t)|
\lt K \, e^{- \te \cp t} 
\quad  \text{for all} \;\;  x\in X_T \; \; \text{and} \;\; t\gt T.
}
It is suffices to prove that for any $\eps>0$,
for the family of functions $\{\hat \Gm (x)\}_{x\in X_T}$, 
there is a finite $\eps$-net.
Note that the family 
$\{\hat \Gm(x)\}_{x\in X_T}$ is uniformly bounded and equicontinuous. 

Let $\eps>0$ be arbitrary. Find $S>T$ such that
$K  e^{- \te \cp t}<\frac\eps2$ for $t\gt S$.
By the Arzel\`a-Ascoli theorem, find a finite $\frac\eps2$-net
for the family of functions  $\{\hat \Gm(x)\}_{x\in X_T}$
restricted to $[T,S]$; moreover, choose this $\frac\eps2$-net 
from the functions of the aforementioned family. Next, we extend
each function forming the $\frac\eps2$-net on $[T,S]$
to $[T,+\infty)$ in such a way
that its modulus is bounded by $K e^{- \te \cp t}$.
The extended functions form an $\eps$-net for the family 
$\{\hat \Gm(x)\}_{x\in X_T}$ considered on $[T,+\infty)$.
Hence, $\hat \Gm:  X_T  \to X_T$ is a compact map.

{\it Step 5. Existence of a fixed point of $\Gm$.}
By Schauder's fixed point therem (Theorem \ref{schauder}), the map
$\hat \Gm: X_T\to X_T$ has a fixed point $x^0\in X_T$. Let $\X^0$
be associated with $x^0$ by \rf{x1111}. 
Choose $S>T$ such that $\|\X^0\|_S\lt \frac14$. Then,
$x^0_\f(t) = x^0(t)$ for $t\gt S$.  
Hence, for all $t\gt S$, $\Gm(x^0)(t) = x^0(t)$. Without loss of generality, we can
set $S=T$ again, so $x^0$ is a fixed point of the map $\Gm$. 
The proof is now complete. 
\end{proof}
\begin{proposition}
\lb{pro99}
Assume \ref{A1}--\ref{A4}.
 Then, the solution $v^*$, constructed in Theorem \ref{t1111}, can be extended to 
a solution for \rf{w-int1111} on $(-\infty,+\infty)$.
Moreover, the extended solution $v^*$ is strictly increasing and 
there exists $T^*\in (-\infty,T)$ such that $v^*(T^*) = 0$.
\end{proposition}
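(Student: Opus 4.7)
The plan is to pull back to the original radial variable via $r = c\,e^{-t}$ with $c = \la^{-1/\te}$, and to work with $u^*(r) := v^*(t)$. Then $v^*$ on $[T,+\infty)$ corresponds to a strictly decreasing $u^*$ on $(0,r_0]$, where $r_0 = c\,e^{-T}$, $u^*(r_0) = v^*(T) \in (0,1)$ and $u^*(0_+)=1$; moreover $u^*$ satisfies the integrated form of \rf{G1},
\begin{equation*}
-(u^*)'(r) = \Bigl(\la\, r^{-\al}\!\int_0^r \!s^\gm h(s)/g(u^*(s))\,ds\Bigr)^{\!1/(\beta+1)},\qquad r\in(0,r_0].
\end{equation*}
The proposition will follow once I extend $u^*$ forward to a strictly decreasing solution on $[r_0,+\infty)$ that vanishes at some finite $r_1 > r_0$: setting $T^*=-\ln(r_1/c)\in(-\infty,T)$, the formula $(v^*)'(t) = -c\,e^{-t}\,(u^*)'(c\,e^{-t})$ yields a strictly increasing $v^*$ on $\Rnu$ with $v^*(T^*)=0$, and integrating \rf{w-dif1111} backwards from $T$ while invoking \rf{w-int1111} at $t=T$ shows that \rf{w-int1111} persists on the whole extended domain.

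For the forward extension I would apply a standard Carath\'eodory-type existence theorem to the above integro-differential equation with initial datum $u^*(r_0)\in(0,1)$. Global extension to $[r_0,+\infty)$ rests on two observations. First, as long as $u^*<1$ the right-hand side is strictly positive, so any local solution is strictly decreasing and thus automatically stays $\lt u^*(r_0)<1$, ruling out $u^*\to 1$. Second, since $g$ is positive and non-increasing on $(-\infty,1]$ by \ref{A1}, the quotient $1/g(u^*(s))$ remains bounded as $u^*$ decreases (in fact it tends to $0$ should $u^*$ diverge), so $-(u^*)'$ cannot blow up at finite $r$. These two facts together with the continuation principle give a strictly decreasing solution $u^*$ defined on all of $[r_0,+\infty)$.

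The main obstacle is to show that $u^*$ must attain the value $0$ at a finite $r_1$. I would argue by contradiction: if $u^*(r)\gt 0$ for every $r\gt r_0$, then $g(u^*(s))\lt g(0)$ on $[r_0,+\infty)$, and the positive lower bound on $h$ from \ref{A4} yields a constant $K_1>0$ with $\int_0^r s^\gm h(s)/g(u^*(s))\,ds \gt K_1\,r^{\gm+1}$ for $r$ large; hence
\begin{equation*}
-(u^*)'(r) \gt K_2\, r^{(\gm+1-\al)/(\beta+1)}
\end{equation*}
for $r$ large and some $K_2>0$. The condition $\gm\gt\al$ in \ref{A4} makes the exponent at least $1/(\beta+1)>0$, so integration from $r_0$ drives $u^*(r_0)-u^*(r)\to +\infty$ as $r\to+\infty$, contradicting $u^*\gt 0$. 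Therefore $u^*$ vanishes at some finite $r_1>r_0$, which completes the proof via the translation described in the first paragraph.
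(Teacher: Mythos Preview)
Your argument is correct. It is the same strategy as the paper's proof—continuation of the solution by ODE theory together with a growth estimate forcing a zero—but carried out in a different coordinate. The paper stays in the $t$-variable, sets $w(t)=e^{(\beta+1-\al)t}{v^*}'(t)^{\beta+1}$, and obtains the first–order system
\[
w'=-e^{-(\gm+1)t}\,\dfrac{h}{g(v^*)},\qquad {v^*}'=e^{(\frac{\al}{\beta+1}-1)t}w^{\frac1{\beta+1}},
\]
which it globalizes by a mollifier cut-off and then shows that the cut-off never activates. Your pull-back to $r$ produces exactly the analogous system for $(u^*,W)$ with $W(r)=\int_0^r s^\gm h/g(u^*)\,ds$, and your a priori bounds ($u^*$ decreasing, hence $g(u^*)\gt g(u^*(r_0))>0$ for $s\gt r_0$, together with boundedness of $h$) play the role of the paper's truncation. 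The contradiction step is the same computation in the two coordinates: the paper obtains ${v^*}'(-R)^{\beta+1}\gt \text{const}\cdot e^{\te R}$, while you obtain $-(u^*)'(r)\gt K_2\,r^{(\gm+1-\al)/(\beta+1)}$; both integrate to force the solution through zero. What your route buys is that no mollifier device is needed; what the paper's route buys is that one never leaves the $t$-setting, so there is no ambiguity about the parameter $c=\la^{-1/\te}$ (in your write-up this $\la$ is as yet unspecified, but any fixed $c>0$ works and the argument goes through verbatim).

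Two small clean-ups: the condition $\gm\gt\al$ is part of \ref{A3}, not \ref{A4}; and your parenthetical ``in fact $1/g(u^*)\to 0$ should $u^*$ diverge'' is not guaranteed by \ref{A1} and is not needed—the relevant bound is simply $1/g(u^*(s))\lt 1/g(u^*(r_0))$ for $s\gt r_0$, together with the finiteness of $\int_0^{r_0}$ coming from the original construction. Also, when you invoke Carath\'eodory, it is cleanest to say explicitly that the integro-differential equation is the two-dimensional system for $(u^*,W)$ above, so that the standard theorem applies directly.
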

\begin{proof}
The proof follows the scheme of the proof of Proposition 2.2 in \cite{jmss}. 
However, it naturally differs from the latter due to a different nonlinearity, so we 
present the proof here for the reader's convenience.
 
Equation \rf{w-int1111} immediately implies that ${v^*}'(t)>0$
for any  possible extension of $v^*$. 
Let $w(t) = e^{(\beta+1-\al)t} {v^*}'(t)^{\beta+1}$. We then obtain the following system with respect to $w$ and $v^*$:
\eq{
\lb{sys4}
w'(t) = -e^{-(\gm+1) t} \frac{h(t)}{g(v^*(t))},\\
{v^*}'(t) = e^{\l(\frac{\al}{\beta+1} -1\r)t} w^\frac1{\beta+1}.
}
Pick $S>T$ and define $c_1 = w(S)$. 
Integrating the first equation in \rf{sys4} gives
\aaa{
\lb{ts1111}
w(t) = w(S) +  \int_t^S  e^{-(\gm+1) s} \frac{h(s)}{g(v^*(s))} ds,
\quad t\lt S.
}
By \ref{A4}, on $(-\infty,S]$, 
$c_1 \lt w(t) \lt  c_1 + c_2 \, e^{-(\gm+1)t}= \mf c_t$ for some constant $c_2>0$.
Define $c_0 = v^*(S)$,
$\nu =  \ind_{(-\infty, c_0+\eps]} \ast \rho_\eps$,
and $\sig_t =  \ind_{[c_1-\eps,\mf c_t+\eps]}\ast \rho_\eps$, 
where $\rho_\eps$, $\eps<c_1$, is a standard mollifier supported
on the ball of radius $\eps$. Note that $\nu(\fdot) = 1$ on $(-\infty,c_0]$ and $\sig_t(\fdot) = 1$ on $[c_1, \mf c_t]$.
Instead of \rf{sys4}, consider  the system of first-order ODEs on $(-\infty,S]$
\eq{
\lb{30}
w'(t) = - e^{-(\gm+1)t} \frac{h(t)\nu(v^*)}{g(v^*)},\\ 
{v^*}'(t) = e^{\l(\frac{\al}{\beta+1} -1\r)t} (\sig_t(w) w)^\frac1{\beta+1}.
}
On $[T,S]$, $(v^*(t),w(t))$ is also a solution to \rf{30}, since over this interval $\nu(v^*(t)) = 1$ and $\sig_t(w(t)) =1$.
Further, since $\frac{h(t)\nu(v^*)}{g(v^*)}\lt \frac{\sup h(t)}{g(c_0+2\eps)}$
and  $0<(\sig_t(w) w)^\frac1{\beta+1} \lt (\mf c_t + 2\eps)^\frac1{\beta+1}$, 
one can extend $(v^*,w)$ to $(-\infty, S]$
(see, e.g., \cite{filippov}, Chapter 2, \S 6),
obtaining by this a solution $(\td v,\td w)$ to \rf{30} which coincides with $(v^*,w)$ on $[T,S]$. 
Since on $(-\infty,S]$, $\nu(\td v) = 1$ and $\sig_t(\td w) = 1$,
 $(\td v, \td w)$ is also a solution to \rf{sys4}.
Hence, we write $(v^*,w)$ (instead of $(\td v,\td w)$) for the extended solution.
There are two possible situations: either $v^*$ is strictly 
positive over $(-\infty,S]$, or there exists $T^*\in\Rnu$ such that $v^*>0$ on $(T^*,S]$ and $v^*(T^*) = 0$.
Let us show that the first situation  cannot be realized. 
If  $v^*$ is strictly  positive over $(-\infty,S]$, then there exists a finite limit
$L = \lim_{t\to-\infty} {v^*}(t)$. We use equation \rf{ts1111} with $t=-R$ and $S=0$,
where $R>0$ is sufficiently large. 
Since $\frac{h(t)}{g(v^*)}>\frac{\inf h(t)}{g(L)}$, we obtain that
\aa{
{v^*}'(-R)^{\beta+1} \gt e^{-(\al-\beta-1)R}{v^*}'(0)^{\beta+1}  
+\frac{\inf h(t)}{(\gm+1)g(L)}\big(e^{\te R}-e^{-(\al-\beta-1)R}\big).
}
Therefore, ${v^*}'(-R) \to +\infty$ as $R\to +\infty$. This implies that
${v^*}(-R) = {v^*}(0) - \int_{-R}^0 {v^*}'(t) dt \to -\infty$ as $R\to +\infty$. This contradicts to the fact that $\lim_{t\to-\infty} {v^*}(t)=L<+\infty$.
Thus, we conclude that the solution $v^*$ to  
 \rf{w-int1111} can be extended to $\Rnu$ in such a way 
that it crosses the $x$-axis at some point $T^*\in\Rnu$.
\end{proof}

\subsection{Proof of Theorem \ref{ss2222}}

\begin{proof}

{\it Step 1. Expressing $u^*(r)$ via $v^*(t)$.} Let $T^*$ be as in Proposition \ref{pro99}. 
Recall that  equation
\rf{w-dif1111} was obtained by the change of variable $r=\la^{-\frac1\te}e^{-t}$.
Since $v^*(T^*) = 0$, to satisfy the boundary condition $u^*(1) = 0$, 
we choose $\la^*$ in such a way that $(\la^*)^{-\frac1\te}e^{-T^*} = 1$, i.e.,
$\la^* = e^{-\te T^*}\!.$ The corresponding change of variable then becomes
$r=e^{T^*-t}$.
Therefore, $u^*(r) =v^*(T^*-\ln r)$ is
the touchdown solution, solving the equation in \rf{G} with $\la=\la^*$
and satisfying the boundary condition $u^*(1) = 0$.

{\it Step 2. Asymptotic behavior of $v^*(t)$.}
By \rf{v1111}, Taylor's formula, and Remark \ref{a2222}, 
\mm{
v^*(t) = g^{-1}(\kp e^{-\te\cp t}) 
+ x(t) \int_0^1\frac{d\xi}{g'\l(g^{-1}(\kp e^{-\te\cp t} + \xi x(t))\r)}
=g^{-1}(\kp e^{-\te\cp t}) - \frac{e^{-\te\cp t} o(1)}{Ak^qe^{-\te\kp q t}}\\
1 + \kp e^{-\te\cp t}\int_0^1 \frac{d\xi}{g'\l(g^{-1}(\xi\kp e^{-\te\cp t})\r)}
+ o(e^{-\te\cp (1-q) t})
= 1-\frac{\kp\, e^{-\te\cp t}}{A \kp^q e^{-\te\cp q t}}\int_0^1\frac{d\xi}{\xi^q}
+ o(e^{-\te\cp (1-q) t})  \\ = 1- \frac{\kp^{1-q}}{A(1-q)}e^{-\te\cp (1-q) t}
+ o(e^{-\te\cp (1-q) t}) \quad  (t\to+\infty).
}

{\it Step 3. Obtaining \rf{s-sol1111}.}
Since $e^{-t} = r e^{-T^*}$ and $\la^* = e^{-\te T^*}$, 
we obtain that $e^{-\te\cp (1-q) t} = r^{\te\cp (1-q)} (\la^*)^{\cp(1-q)}$.
This implies \rf{s-sol1111}.

{\it Step 4. $\la^*$ is the pull-in voltage.}  This follows immediately from Proposition
\ref{unique1111}.
\end{proof}
\begin{remark}
\rm Note that by uniqueness of a touchdown solution
(Proposition \ref{unique1111}), the solution $u^*$, constructed above,
is the unique integral touchdown solution to \rf{G}.
\end{remark}

\subsection*{Acknowledgements} 
 J.M.~do~\'O acknowledges partial support  from 
CNPq through the grants 312340/2021-4  and 
429285/2016-7 and 
Para\'iba State Research Foundation (FAPESQ), grant no 3034/2021.
 E. Shamarova acknowledges partial support from 
Universidade Federal da Para\'iba
 (PROPESQ/PRPG/UFPB) through the grant 
PIA13631-2020 (public calls no 03/2020 and 06/2021).


\begin{thebibliography}{99}
\bibitem{MR2500847} D. Castorina, P. Esposito, and B. Sciunzi. $p$-MEMS equation on a ball. Methods Appl. Anal., 15(3):277--283, 2008.
%
\bibitem{MR2471138}
D. Castorina, P. Esposito, and B. Sciunzi. Degenerate elliptic equations with singular nonlinearities. Calc. Var. Partial Differential Equations, 34(3):279--306, 2009. 
%
\bibitem{MR1422009}
P. Cl\'ement, D. G. de Figueiredo, and E. Mitidieri. Quasilinear elliptic equations with critical exponents. Topol. Methods Nonlinear Anal., 7(1):133--170, 1996.
%
\bibitem{MR0382848}
M. G. Crandall and P. H. Rabinowitz. Some continuation and variational methods for positive solutions of nonlinear elliptic eigenvalue problems. Arch. Ration. Mech. Anal., 58(3):207--218, 1975.
%
\bibitem{MR2569324}
J. D\'avila. Singular solutions of semi-linear elliptic problems. Handbook of differential equations: stationary partial differential equations. Vol. VI, Handb. Differ. Equ., 
83--176. Elsevier/North- Holland, Amsterdam, 2008.
%
\bibitem{MR2604963}
P. Esposito, N. Ghoussoub, and Y. Guo. Mathematical analysis of partial differential equations modeling electrostatic MEMS, volume 20 of Courant Lecture Notes in Mathematics. Courant
Inst. Math. Sci., New York; American Mathematical Society, Providence, RI, 2010.
%
\bibitem{filippov} A.F. Filippov, Introduction to the theory of differential equations (Vvedenie v teoriyu differenczialnih uravneniiy, in Russian), KomKniga, 2007.
%
\bibitem{Guo}
Y. Guo, Z. Pan, M.J. Ward, Touchdown and pull-in voltage behavior of a MEMS device with varying dielectric properties, SIAM J. Appl. Math. 66, 309--338, 2005.

\bibitem{Guo1}
 J.-S. Guo, P. Souplet, No touchdown at zero points of the permittivity profile for the MEMS problem, SIAM J. Math. Anal. 47, 614--625, 2015.
 
\bibitem{Guo2}
 Y. Guo, On the partial differential equations of electrostatic 
 MEMS devices III: refined touchdown behavior, J.
Differential Equations 244, 2277--2309, 2008.
\bibitem{GM}
M. Ghergu and Y. Miyamoto, Radial single point rupture solutions for a general MEMS model. Calc. Var. Partial Differential Equations, 61, 47, 2022.
%
\bibitem{MR1929156}
J. Jacobsen and K. Schmitt. The Liouville-Bratu-Gelfand problem for radial operators. J. Differential Equations, 184(1):283--298, 2002.
%
\bibitem{MR2166492}
J. Jacobsen and K. Schmitt. Radial solutions of quasilinear elliptic differential equations. Handbook of differential equations, pages 359--435. Elsevier/North-Holland, Amsterdam, 2004.
%
\bibitem{KMS}
N. Kavallaris, T. Miyasita, and T. Suzuki, 
Touchdown and related problems in electrostatic MEMS device equation,
NoDEA Nonlinear Differential Equations Appl. 15, 363--385, 2008.
%
\bibitem{Korman}
P. Korman, Infinitely many solutions for three classes of self-similar equations with $p$-Laplace operator: Gelfand, Joseph-Lundgren and MEMS problems, Proc. Roy. Soc. Edinburgh Sect. A 148, 341--356, 2018.
%
\bibitem{MR3759060}
J. M. do \'O  and R. G. Clemente. Some elliptic problems with singular nonlinearity and advection for Riemannian manifolds. J. Math. Anal. Appl., 460(2):582--609, 2018.
\bibitem{MR3491876}
J. M. do \'O and E. da Silva. Quasilinear elliptic equations with singular nonlinearity. Adv. Nonlinear Stud., 16(2):363--379, 2016.
\bibitem{MR3571913}
J. M. do \'O and E. da Silva.
Some results for a class of quasilinear elliptic equations with singular nonlinearity. Nonlinear Anal., 148:1--29, 2017.
\bibitem{jmss}
J.M. do \'O, E. Shamarova, and E. da Silva, Singular solutions to $k$-Hessian equations with fast-growing nonlinearities, Nonlinear Anal. 222, Paper no 113000, 2022.
%
\bibitem{MR1955412}
J. A. Pelesko and D. H. Bernstein. Modeling MEMS and NEMS. Chapman \& Hall/CRC Math., Boca Raton, FL, 2003.
\bibitem{smart} D.R. Smart. Fixed point theorems.
Cambridge University Press, 1980.
\end{thebibliography}
\end{document}